\theoremstyle{plain}
\numberwithin{equation}{section}
 \newtheorem{theorem}{Theorem}[section]
 \newtheorem{proposition}[theorem]{Proposition}
 \newtheorem{lemma}[theorem]{Lemma}
 \newtheorem{corollary}[theorem]{Corollary}
 \newtheorem{definition}[theorem]{Definition}
\newcommand{\C}{{\mathbb C}}
\newcommand{\Po}{{\mathbb P}}
\begin{document}

\title{Projective Joint Spectra and Characters of representations of $\tilde{A}_n$}

\author{T. Peebles}
\address{Department of Mathematics and Computer Science \\ Mesa Community College\\Mesa, AZ 85207}
\email{thomas.peebles@mesacc.edu}
\author{M. Stessin}
\address{Department of Mathematics and Statistics \\ University at Albany \\ Albany, NY 12222}
\email{mstessin@albany.edu}
\keywords{projective joint spectrum, Coxeter groups, representations, determinantal hypersurfaces, determinantal manifolds}
\subjclass[2020]{Primary: 47A10,47A11,20C15,05A05}

\maketitle

\begin{abstract}
For a tuple of square complex-valued $N\times N$ matrices $A_1,\dots,A_n$ the determinant of their linear combination $x_1A_1+\cdots +x_nA_n$, which is called \textit{a pencil}, is a homogeneous polynomial of  degree  $N$ in $\C[x_1,...x_n]$. Zero-set of this polynomial is an algebraic set  in the projective space $\C\Po^{n-1}$. This set is called the determinantal hypersurface or determinantal manifold of the tuple $(A_1,...,A_n)$.  It was shown in \cite{CST} that determinantal hypersurfaces contain substantial information about representations of finite Coxeter groups. Namely, if $G$ is a non-special Coxeter group of type $A,B$, or $D$, $\rho_1$ and $\rho_2$ are two linear representations of $G$, and the determinantal hypersurfaces of images of the Coxeter generators of $G$ under $\rho_1$ and $\rho_2$ coincide as divisors in the projective space, the characters of $\rho_1$ and $\rho_2$ are equal, and, therefore, $\rho_1$ and $\rho_2$ are equivalent. 

In \cite{PST} this result was extended in the characters part to affine Coxeter groups of types $B,C$, and $D$. It was shown there that each such group contains a finite subset such that, if the determinantal hypersurfaces of the images of this set under two finite-dimensional representations coincide as divisors in the projective space, the characters of these representations are equal. Notably, the affine Coxeter groups of $A$ type are not covered by this result, as their combinatorics is quite different.

In this paper we explicitly construct a finite set in $\tilde{A}_n$ having the same property. We also show that every group which is a semidirect product of a fine group and a finitely generated abelian group contains a finite subset with the similar property:  for every finite-dimensonal representation of the group, the determinantal hypersurface of images of the set determines the representation character. 
\end{abstract}

\section{Introduction}

For a tuple of square complex-valued $N\times N$ matrices $A_1,\dots,A_n$ the determinant of their linear combination $x_1A_1+\cdots +x_nA_n$, which is called \textit{a pencil}, is a homogeneous polynomial of  degree  $N$ in $\C[x_1,...x_n]$. Zero-set of this polynomial is an algebraic set  in the projective space $\C\Po^{n-1}$. This set is called the \textbf{determinantal hypersurface} or \textbf{determinantal manifold} of the tuple $(A_1,...,A_n)$ and is denoted by $\sigma(A_1,...,A_n)$:
$$\sigma(A_1,...,A_n)=\Big\{ [x_1:...:x_n]\in \C\Po^{n-1}:  \ det(x_1A_1+\cdots + x_nA_n)=0 \Big\}.$$
An infinite-dimesional analog of the determinantal hypersurfaces was introduced in \cite{Y} and called \textbf{the projective joint spectra}. Given a tuple $(A_1,...,A_n)$ of linear operators acting on a Hilbert space $H$, the projective joint spectrum of the tuple is 
$$\sigma(A_1,...,A_n)=\Big\{[x_1:...:x_n]\in \C\Po^{n-1}: \ x_1A_1+\cdots +x_nA_n \ \mbox{is not invertible} \Big\} . $$
When $H$ is finite-dimensional, the projective joint spectrum and determinantal manifold are the same, so in this case we use these names interchangably. 

In general, it is possible that $\sigma(A_1,...,A_n)=\C\Po^{n-1}$. To avoid such redundancy  it is frequently assumed that at least one of the elements in the tuple is invertible and usually taken to be the identity matrix (operator). We will assume that the last operator in the tuple is the identity. In this case, the determinantal manifold has codimension 1, which justifies the name hypersurface. It was shown in \cite{ST} that in infinite-dimensional case if $[x_1:...:x_n:x_{n+1}]\in \sigma(A_1,...,A_n,I),$ and $(-x_{n+1})$ is an isolated spectral point of $x_1A_1+\cdots +x_nA_n$ of finite multiplicity, then the projective joint spectrum $\sigma(A_1,...,A_n,I)$ is an analytic set of codimension 1 in a neighborhood  of $ [x_1:...:x_n:x_{n+1}]$.

In what follows we need the following 2 specifications of the projective joint spectrum.

For a tuple $(A_1,...,A_n)$ we call \textbf{proper projective joint spectrum} of $(A_1,...A_n)$ the part of the projective joint spectrum $\sigma(A_1,...,A_n,I)$ that lies in the chart $\Big\{ x_{n+1}\neq 0\Big\}$ and we set $x_{n+1}=-1$. We denote the proper projective joint spectrum by $\sigma_p(A_1,...,A_n)$ and consider it as a set in $\C^n$, so that in the finite-dimensional case
\begin{equation}\label{proper spectrum}
\sigma_p(A_1,...,A_n)=\Big\{ (x_1,...,x_n): \ det(x_1A_1+\cdots +x_nA_n-I)=0\Big\}. \end{equation} 
It follows from \eqref{proper spectrum} that $1 \in \sigma(x_1A_1+\cdots+x_nA_n)$ for every point $(x_1,x_2,\ldots,x_n) \in \sigma_p(A_1,\ldots, A_n)$. Since we consider the finite-dimensional case, 1 is an eigenvalue of $x_1A_1+\dots+x_nA_n$,  and we can choose a small contour $\gamma$ around 1 with no other eigenvalues inside.  If $(x_1,...,x_n)$ is a regular point of $\sigma_p(A_1,...,A_n)$, the rank of the projection 
$$ P(x_1,\dots,x_n)=\frac{1}{2\pi i} \int_{\gamma} \bigg( w-x_1A_1-\dots -x_nA_n\bigg)^{-1}dw$$
is the multiplicity  of this eigenvalue (that is the sum of dimensions of all Jordan cells corresponding to eigenvalue 1).  It is well-known and is easy to prove that this multiplicity is the same at every pair of regular points belonging to the same component of the algebraic set $\sigma_p(A_1,...,A_n)$. We prescribe this multiplicity to the component and define the joint spectrum in the divisor form, $\sigma_p^d(A_1,...,A_n)$  as the divisor in ${\mathbb C}^n$ which is the sum of spectral components with these prescribed multiplicities. Of course, it coincides with the zero-divisor of the polynomial $det(x_1A_1+\dots+x_nA_n-I)\in {\mathbb C}[x_1,\dots,x_n]$.

The study of determinants of matrix pencils started in late 1800-s. In the case when $G$ is a finite group and the tuple is the set of transformations of the group-algebra of $G$ given by all the  elements of $G$, the corresponding determinant is called \textbf{the group determinant}. It's study ultimately led Frobenius \cite{Fr1} - \cite{Fr3} to create the foundation of representation theory. A systematic study of the question when a hypersurface in $\C\Po^n$ has a determinantal representation was originated by Dickson \cite{D1} - \cite{D5} and a lot of work has been produced in this area since. Not trying to give an exhaustive account of the results here we just mention \cite{C}, \cite{D1}-\cite{D5}, \cite{Dol}, \cite{HKV}, \cite{Helt}-\cite{Helt2}, \cite{KV}, \cite{V}  and references there.

Another natural question is: given that a hypersurface in $\C\Po^n$ admits a determinantal representation, what does the geometry of the surface tell us about relations between operators in a corresponding tuple? This question was not paid much attention for a long time. The only result in this direction published before 2000-s, which we know of,  is the 1952 paper by Motzkin and Taussky \cite{MT}. The situation changed in the last 10-15 years, and a lot of research investigating determinantal manifolds and projective joint spectra from this angle has been produced during this period. Here we mention papers \cite{Am}, \cite{BCY}, \cite{CY}, \cite{CSZ}-\cite{CST}, \cite{DY}, \cite{GY1}, \cite{GY2}, \cite{KV}, \cite{PS}, \cite{PST}, \cite{R}, \cite{St} \cite{ST}, \cite{SYZ}, \cite{Y} and references there.

In particular, it was shown in \cite{CST} that determinantal hypersurfaces contain substantial information about representations of finite Coxeter groups.  

Given a finite group $G$, a subset $L=\{g_1,...,g_k\}\subset G$, and a linear representation $\rho:G\to GL(V)$ we write
$$\sigma_p^d(L,\rho)=\sigma_p^d\Big(\rho(g_1),...,\rho(g_n)\Big). $$ 
 It follows  from the proof of Theorem 1.1 in \cite{CST} ( see Lemma \ref{Powers lemma} below) that, if a subset $L$ represents all conjugacy classes of $G$ and two representations of $G$ satisfy
 $$  \sigma_p^d(L,\rho_1)=\sigma_p^d(L,\rho_2), $$
 then the characters of $\rho_1$ and $\rho_2$ are the same and, therefore, these representations are equivalent. It was also shown in \cite{CST} that for finite non-special Coxeter groups of types A,B, and D the set $L$ can be substantially smaller: just consisting of the set of Coxeter generators of the group. These results show that for finite groups proper projective joint spectra in the divisor form carry at least as much information about representations as characters.
 
 Thus, it is natural to ask: \\
 a) whether the projective joint spectrum in the divisor form of images under a representation of every generating set of a finite group $G$  determines the representation; \\
 b) for which infinite groups can we find a finite subset $L$ such that the $\sigma_p^d(L,\rho)$ determines the character of $\rho$ for every finite-dimensional representation? For such groups the projective joint spectra carry all the information about representations provided by characters.
 
 The answer to the first question is negative. The first counterexample was given in \cite{KV} and additional ones related to Hadamard matrices and representations of certain subgroups of permutation groups can be found in \cite{PS}.
 
 As for the second question,  there was an explicit construction in \cite{PST} showing that for affine Coxeter groups of types $\tilde{B},\tilde{C}$, and $\tilde{D}$ there exist finite subsets whose proper projective joint spectra in the divisor form determine characters of all finite-dimensional representations. Notably, $\tilde{A}$ type groups are not covered by this result as their combinatorics is different.
 
In this paper we fill this gap and explicitly construct a finite set in $\tilde{A}_n$ for which projective joint spectra in the divisor form of it's image under linear representations determine representations' characters (Theorem \ref{main new}). We also show that the answer to question b) above is positive for groups, which we call \textbf{affine type groups} (Theorem \ref{A-groups}). These are semidirect products of finite groups and finitely generated abelian groups.

The structure of this paper is as follows.  In Section \ref{section 2} we collect necessary background results. Section \ref{section 3} is devoted to a construction of finite sets in affine type groups that positively answers question b) for such groups. Section \ref{subgroupconstruction} is devoted to combinatorics of $\tilde{A}_n$ and deriving $\tilde{A}$-echelon forms. Finally, in  Section \ref{main} we give an explicit construction of the finite set in $\tilde{A}_n$ whose spectra determine characters of finite-dimensional representations.

\section{Background and combinatorics of $A_n$}\label{section 2}
\subsection{Projective joint spectra} \label{PJSbackground}

In what follows we will need the following Lemma proved in \cite{CST}.

\begin{lemma}\label{Powers lemma} 

Let  $A_1, \cdots, A_n$, and $B_1, \cdots, B_n$  be linear operators acting on a finite-dimensional Hilbert space $H$.  If 
$$\sigma_p^d(A_1, \cdots,A_n) = \sigma_p^d(B_1,\cdots, B_n)$$
then for every $m = (m_1,m_2,\cdots,m_n)\in {\mathbb N}^n$ we have 

$$\displaystyle \sum_{(j_1,\ldots,j_k), sign(A_{j_1}A_{j_2}\cdots A_{j_k}) = m} Tr(A_{j_1}A_{j_2}\cdots A_{j_k})$$ 

$$ =\sum_{(j_1,\ldots,j_k), sign(B_{j_1}B_{j_2}\cdots B_{j_k}) = m} Tr(B_{j_1}B_{j_2}\cdots B_{j_k})$$
where the sum is taken over all products with the specified signature. 

\end{lemma}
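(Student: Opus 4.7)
The plan is to upgrade the geometric hypothesis to an equality of polynomials and then extract the trace identities by taking formal logarithms.

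First I would argue that $\sigma_p^d(A_1,\dots,A_n) = \sigma_p^d(B_1,\dots,B_n)$ forces
$$p_A(x) := \det\bigl(x_1 A_1 + \cdots + x_n A_n - I\bigr) = \det\bigl(x_1 B_1 + \cdots + x_n B_n - I\bigr) =: p_B(x)$$
as elements of $\C[x_1,\dots,x_n]$. Indeed, the paragraph preceding the statement identifies $\sigma_p^d(A_1,\dots,A_n)$ with the zero divisor of $p_A$, and since $\C[x_1,\dots,x_n]$ is a UFD, two polynomials sharing a zero divisor (with multiplicities) differ only by a nonzero scalar. Evaluating at $x = 0$ gives $p_A(0) = p_B(0) = (-1)^N$ with $N = \dim H$, pinning the scalar to $1$.

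Setting $f_A(x) = \det\bigl(I - \sum_i x_i A_i\bigr) = (-1)^N p_A(x)$ (and similarly $f_B$), we have $f_A \equiv f_B$ with $f_A(0) = 1$. On a small polydisc about the origin, $\|\sum_i x_i A_i\| < 1$, so
$$\log f_A(x) \;=\; \operatorname{Tr}\log\Bigl(I - \textstyle\sum_i x_i A_i\Bigr) \;=\; -\sum_{k=1}^\infty \frac{1}{k}\operatorname{Tr}\Bigl(\bigl(\textstyle\sum_i x_i A_i\bigr)^{\!k}\Bigr).$$
Expanding the $k$-th power term by term,
$$\operatorname{Tr}\Bigl(\bigl(\textstyle\sum_i x_i A_i\bigr)^{\!k}\Bigr) = \sum_{(j_1,\dots,j_k)} x_{j_1}\cdots x_{j_k}\,\operatorname{Tr}(A_{j_1}\cdots A_{j_k}),$$
so the coefficient of $x^m = x_1^{m_1}\cdots x_n^{m_n}$ (with $|m| = k$) in $-k\log f_A(x)$ is exactly $\sum_{\operatorname{sign} = m}\operatorname{Tr}(A_{j_1}\cdots A_{j_k})$, and analogously for the $B_i$'s.

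Since $f_A$ and $f_B$ agree and are nonvanishing near the origin, $\log f_A \equiv \log f_B$ there, so their Taylor coefficients match term by term. Comparing coefficients of $x^m$ and cancelling the common factor $-1/|m|$ yields the asserted identity. The main obstacle is the first step: lifting the divisor equality to an equality of polynomials. This is exactly where working with the divisor form (rather than the set-theoretic hypersurface) is essential, since the multiplicity data permits recovery of the polynomial up to a scalar via unique factorization; without it one could only conclude that $p_A$ and $p_B$ share the same radical. Once polynomial equality is in hand, the rest is a routine power-series comparison.
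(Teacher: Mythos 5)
Your proof is correct and takes the same route as the argument in \cite{CST} that the paper cites for this lemma (the paper itself does not reprove it): recover the polynomial $\det(x_1A_1+\cdots+x_nA_n-I)$ from the divisor via unique factorization, pin down the scalar by evaluating at the origin, and then match Taylor coefficients of $\log\det\bigl(I-\sum_i x_iA_i\bigr)=-\sum_{k\ge 1}\tfrac{1}{k}\operatorname{Tr}\bigl(\bigl(\sum_i x_iA_i\bigr)^k\bigr)$, where the coefficient of $x^m$ is $-\tfrac{1}{|m|}$ times the signature-$m$ trace sum. No gaps to report.
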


\subsection{Group representations}
Recall that a representation of a group $G$ is a homomorphism $\rho: G \rightarrow GL(V)$, from $G$ to the group of bounded invertible linear operators acting on a linear space $V$.  For finite-dimensional representations an important functional used in representation theory is the character of a representation, that is defined as follows.
\begin{definition}
Given a representation, $(\rho, V)$ of a group $G$, its \textbf{character}, denoted by $\chi_{\rho}$, is defined as
$$\chi_\rho(g) = Tr(\rho(g))$$
\end{definition}
 
Two representations $(\rho_1,V_1)$ and $(\rho_2,V_2)$ of a group $G$ are called \textbf{equivalent} (the notation is $\rho_1 \sim \rho_2$), if there is an isomorphism $C : V_1\to V_2$ such that for all $g \in G$ we have
$$\rho_1(g) = C^{-1}\rho_2(g)C$$
The following result is well known: 
\begin{theorem}\label{character theorem}
Let $G$ be a finite group. If two representations $\rho_1, \rho_2$ of $G$ satisfy 
$$\chi_{\rho_1} = \chi_{\rho_2}$$
then $\rho_1 \sim \rho_2$.
\end{theorem}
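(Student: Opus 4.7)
The plan is to invoke two classical pillars of finite-group representation theory, apply them in sequence, and then read off the conclusion; since this is a well-known result we would in practice just cite a standard reference, but here is the sketch.

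First, I would use Maschke's theorem: because $G$ is finite and we are working over $\C$, every finite-dimensional representation of $G$ is completely reducible. Fix a complete list $\pi_1,\dots,\pi_r$ of representatives of the equivalence classes of irreducible representations of $G$. Then each of the given representations decomposes as
\[
\rho_i \sim \bigoplus_{j=1}^r m_{i,j}\,\pi_j, \qquad i=1,2,
\]
for uniquely determined nonnegative integer multiplicities $m_{i,j}$. Since the trace is additive under direct sums, we get $\chi_{\rho_i} = \sum_{j=1}^r m_{i,j}\,\chi_{\pi_j}$.

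Second, I would invoke the Schur orthogonality relations. Equip the space of class functions on $G$ with the Hermitian inner product
\[
\langle \varphi,\psi\rangle = \frac{1}{|G|}\sum_{g\in G} \varphi(g)\,\overline{\psi(g)}.
\]
Schur's lemma, applied to averages of intertwining maps over $G$, shows that $\chi_{\pi_1},\dots,\chi_{\pi_r}$ form an orthonormal system. Consequently the multiplicities are recovered from the character by the formula $m_{i,j} = \langle \chi_{\rho_i},\chi_{\pi_j}\rangle$.

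Combining the two steps, the hypothesis $\chi_{\rho_1}=\chi_{\rho_2}$ forces $m_{1,j}=m_{2,j}$ for every $j$, so $\rho_1$ and $\rho_2$ have identical isotypic decompositions and are therefore equivalent. The only nontrivial ingredient is Schur orthogonality; Maschke's theorem and the bookkeeping with multiplicities are routine. In the paper itself this theorem is stated as background, so I would not reproduce the proof but simply cite a standard reference such as Serre's \emph{Linear Representations of Finite Groups}.
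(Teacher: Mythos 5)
Your proposal is correct and is the standard argument (Maschke plus Schur orthogonality recovering the multiplicities), which is exactly what the cited reference \cite{S} gives; the paper itself offers no proof, stating the theorem as well known and pointing to Serre. Nothing to add.
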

This and other facts about linear representations of groups could be found in \cite{S}.

Given a set of generators of a group $G$, this set is considered as an alphabet, and every element $w$ in $G$ is represented by a word $\tau$ in this alphabet (of course, this representation may not be unique). The length of a word $\tau$, \  $|\tau|$, is the number of letters it has.  The length of an element $w\in G$ is the smallest length of a word representing this element.  See \cite{GP} for a more detailed account of this.  

\subsection{Coxeter Groups}\label{CGbackground}
A Coxeter group is a finitely generated group that has the presentation 
$$
\langle
s_i\in S \mid (s_is_j)^{m_{ij}} 
= 1 \ \text{and} \ m_{ii} = 1
\rangle
$$ 
where $S=\{s_1,\dots, s_n\}$ is a finite set of generators and
$m_{ij}\in \{1, 2, \ldots, \infty\}$. 

  The matrix $M_{i,j} = [m_{ij}]$ is called the Coxeter matrix, 
   and, in order to avoid redundancies, it is always assumed  that this matrix is symmetric. It is easy to see that if $m_{i,j}=2$, then $s_i$ and $s_j$ commute.
 
Another way to express a Coxeter group is through its Coxeter diagram.  This is  a graph where each vertex represents a generating element and two vertices, say $i$, $j$, are connected by an edge if $m_{i,j} \geq 3$, and, if $m_{i,j} > 3$, then this number is placed above the connecting edge.  In this paper we are interested in the affine Coxeter groups of type $\tilde{A}_n$ and the finite Coxeter groups  $A_n$ which are present by the following Coxeter diagrams:

\begin{center}
\begin{tikzpicture}

\pgfmatrix{rectangle}{center}{Coxeter Group $A_n$}{}{}{}
{
\node[left]{$A_n$}; \\ 
\draw[fill=black](0,0) circle[radius = .1] node[below]{1} -- (1,0)circle[radius = .1]node[below]{2};
\draw[dotted](1,0)--(2,0); 
\draw[fill = black](2, 0) circle[radius = .1] node[below]{n-1}--(3,0)circle[radius = .1] node[below]{n};\\
}
\end{tikzpicture}

\begin{tikzpicture}
\pgfmatrix{rectangle}{center}{Affine Coxeter Group}{}{}{}
{
\node[left]{$\tilde{A}_n$} ; \\
\draw[fill=black](0,0) circle[radius = .1] node[below]{2} -- (1,0) circle[radius = .1]node[below]{3};
\draw[dotted, thick] (1,0) circle[radius = .1] -- (2,0)circle[radius = .1]node[below]{n};
\draw[fill=black] (2,0) circle[radius = .1] -- (3,0) circle[radius = .1] node[below]{n+1};
\draw[fill = black] (0,0) -- (1.5,1) circle[radius = .1]node[above]{1};
\draw[fill = black](3,0) -- (1.5,1);\\
}
\end{tikzpicture}
\end{center}

Notice $\tilde{A}_n$ has $n+1$ vertices and we enumerate the top element as 1 and count counterclockwise.  Removing any vertex, we get a group that is isomorphic to $A_n$. 

More information on Coxeter groups, their properties, and combinatorics may be found in \cite{BB,HH,JH}.

\subsection{Admissible transformations and the combinatorics of $A_n$}

In order to explicitly represent a decomposition of $\tilde{A}_n$  into a semidirect product we will need to have a better understanding of the combinatorics of $A_n$.  We use letter ``a" for Coxeter generators of $A_n$ and $\tilde{A}_n$. It was shown in \cite{GP} that using \textbf{admissible transformations}, every element of $A_n$ can be reduced to a canonical form. In \cite{CST} this canonical form for groups $A_n,B_n$ and $D_n$, that was called in \cite{CST}  \textbf{echelon form}, was obtained via an algorithm that used a more restricted set of admissible transformations.  The admissible transformations of words are: 

\begin{enumerate}
\item Cancelling transformations -  
$$w'a_ia_iw'' \rightarrow w'w''$$
\item Commuting transformations - if $a_i$ and $a_j$ commute then the transformation is
$$w'a_ia_jw'' \rightarrow w'a_ja_iw''$$
\item Circular transformation - given a word $a_{i_1}\cdots a_{i_k}a_{i_{k+1}}\cdots a_{i_N}$ we have
$$a_{i_1}\cdots a_{i_k}a_{i_{k+1}}\cdots a_{i_N} \rightarrow a_{i_{k+1}}\cdots a_{i_N}a_{i_1}\cdots a_{i_k}$$
\item Replacement transformations - This replaces a certain subword consisting of a string of letters by another representation of this subword.  
$$w'a_ia_{i+1}a_iw'' \rightarrow w'a_{i+1}a_ia_{i+1}w''$$
\end{enumerate}

The following result was proved in \cite{GP} (see also \cite{CST}):
\begin{theorem}
Every word, $w \in A_n$, can be transformed using admissible transformations into a word 
\begin{equation}\label{echelon}
\tilde{w}=\delta_1\delta_2\cdots \delta_n
\end{equation}
where $\delta_i \in \{1,a_i\}$. This algorithm does not increase the length of the word.
\end{theorem}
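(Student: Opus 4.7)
The plan is to proceed by induction on the rank $n$ and to exhibit an explicit algorithm, built from the four admissible transformations, that reduces any word in the generators $a_1,\ldots,a_n$ of $A_n$ to echelon form without increasing its length. In the base case $n=1$ the only generator is $a_1$; iterated cancellations bring any word to either $1$ or $a_1$, which already has the form $\delta_1$.

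For the inductive step, assume the result is known in $A_{n-1}$ and let $w$ be a word in $a_1,\ldots,a_n$. The strategy is to drive all occurrences of the highest-index generator $a_n$ into a block at the right-hand end of the word; once this is done the remaining prefix is a word in the parabolic subgroup generated by $a_1,\ldots,a_{n-1}$, which is isomorphic to $A_{n-1}$, and the inductive hypothesis delivers $\delta_1\cdots\delta_{n-1}$, while the trailing block provides $\delta_n$. The key admissibility facts I would use are: (i) $a_n$ commutes with $a_j$ for every $j\leq n-2$, since the Coxeter diagram of $A_n$ is a path, so such letters offer no resistance to $a_n$ under the commuting transformations; (ii) the braid replacement $a_n a_{n-1} a_n \to a_{n-1} a_n a_{n-1}$ is length-preserving and lets an isolated $a_n$ push through an $a_{n-1}$; (iii) two adjacent $a_n$'s cancel, strictly reducing the length; and (iv) the circular transformation lets me cyclically rotate the word so that (i)--(iii) become applicable when an $a_n$ is pinned near the left end.

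The main obstacle is to argue that this procedure always terminates and never increases the length, since the length-preserving move (ii) could in principle be iterated without visible progress. I would guard against this by introducing a monovariant such as the lexicographic pair consisting of the word length followed by a weighted sum of positions of the $a_n$ letters, and verify case by case that every non-cancelling admissible step strictly decreases this quantity while cancellations strictly decrease length. Once the $a_n$-block is in place and, by repeated cancellations, reduced to its minimal representative, the prefix is a shorter word in $A_{n-1}$; the induction hypothesis then produces $\tilde{w} = \delta_1 \cdots \delta_n$ with each $\delta_i \in \{1, a_i\}$. The non-increase of length throughout follows step by step from the analysis of the individual admissible transformations used.
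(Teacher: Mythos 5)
The paper does not actually prove this theorem; it imports it from \cite{GP} (see also \cite{CST}), so there is no in-paper proof to measure your argument against, and it has to stand on its own. As written it does not: the inductive step is broken. The problem is that the circular transformation is a transformation of the \emph{whole} word, so once you have isolated a trailing $\delta_n$ you are not entitled to reduce the prefix $w'$ ``by the inductive hypothesis in $A_{n-1}$'' --- the cyclic shifts needed for that reduction, applied to the prefix alone, are not admissible transformations of $w'\delta_n$ whenever the rotated segment involves $a_{n-1}$, which does not commute with $a_n$. Concretely, take $w=a_2a_1a_2a_3$ in $A_3\cong S_4$. The single $a_3$ is already at the right end, and your procedure would reduce the prefix $a_2a_1a_2$ inside $A_2$ (circular shift, then cancellation) to $a_1$, producing $a_1a_3$. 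But $a_2a_1a_2a_3=(13)(34)$ is a $3$-cycle, while $a_1a_3=(12)(34)$ has cycle type $2+2$: the output is not even conjugate to $w$, whereas every admissible transformation preserves the conjugacy class. (The correct echelon form of this $w$ is $a_1a_2$, reached only by circular moves of the \emph{full} word that carry letters past the trailing $a_3$.) So the reduction of the prefix and the handling of $a_n$ cannot be decoupled; any correct argument must interleave them, and that is precisely where the work lies.

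The second gap is the termination and length claim, which is the real content of the theorem (it is the type-$A$ instance of the Geck--Pfeiffer cyclic-shift theory). You correctly identify the danger that the length-preserving moves could cycle, but the proposed monovariant --- length followed by a weighted sum of positions of the $a_n$'s --- is only asserted, and it is not plausible as stated: circular transformations permute positions wholesale, and collecting the $a_n$'s can get locally stuck (e.g.\ in $a_nua_{n-1}va_{n-1}u'a_n$ with $u,v,u'$ free of $a_{n-1},a_n$, commuting and braiding leave the two $a_n$'s separated by $a_{n-1}va_{n-1}$, and only a circular move of the full word makes further progress). Until a monovariant is exhibited and verified against all four transformations --- including the circular one --- the ``does not increase the length'' clause, and indeed termination itself, remains unproved; what you have is a plan for a proof rather than a proof.
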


The word \eqref{echelon} is called \textbf{an echelon form} of $w$.

 It is important to notice that admissible transformations  preserve conjugacy classes, and, therefore, characters of representations.  
 
 Every non-trivial echelon form \eqref{echelon}  can be partitioned into commuting blocks consisting of consecutive letters $a_j$. They are separated by
 elements  $\delta_k = 1$ appearing after the first non-trivial letter. We denote these blocks by $\Delta_j$ and obtain the following form of the word
 \begin{equation} \label{A-echelon}
 \Delta_1, \Delta_2,...,\Delta_k ,
 \end{equation}
 
   If there are two consecutive blocks, say 
   $$\Delta_1=a_pa_{p+1}\dots a_j, \ \mbox{and} \ \Delta_2=a_ra_{r+1}\dots a_s,$$
 with $r-j\geq 3$, then $a_{j+1}$ commutes with all generators in blocks $\Delta_2,\dots, \Delta_k$, as well as with $a_p,\dots, a_{j-1}$ so we can use the fact that $a_k^2=1$ and perform commuting and circular transformations to obtain 
 \begin{eqnarray*}
& \Delta_1 \Delta_2 \dots=a_p\dots a_j \boxed{a_{j+1}a_{j+1}} a_r	\dots a_s \dots \sim a_p\cdots a_{j-1} a_{j+1}a_ja_{j+1}a_r\dots a_s\dots \\
& = a_p\cdots a_{j-1} a_ja_{j+1}a_ja_r\dots a_s\dots \sim a_p\dots a_ja_{j-1}a_ja_{j+1}a_r\dots a_s\dots \\
& \cdots \cdots \cdots \cdots \cdots \cdots \cdots \cdots \cdots \cdots \cdots \cdots \cdots \cdots \cdots \cdots \cdots \cdots \cdots \cdots \\
&\sim a_{p+1}a_pa_{p+1}a_{p+2}a_{p+3}\cdots a_{j+1} a_r\cdots a_s \cdots= a_pa_{p+1}a_pa_{p+2}\cdots a_{j+1}a_r\cdots a_s\cdots \\
&\sim \boxed{a_{p+1}\cdots a_{j+1}}\boxed{a_r \cdots a_s}\cdots=\tilde{\Delta}_1\Delta_2 \cdots .
 \end{eqnarray*}
We resulted in a new block form with the distance between the first two blocks, being $r-j-1$, that is  1 less than in the original representation. Continuing this process we will eventually come to the block form with the distance between the end of the first block and the beginning of the second one being 2, meaning that there is only one letter gap between them.

Applying this process to all adjacent blocks, we will come to a block form with 1 space between every two consecutive non-trivial blocks. We call such block form \textbf{block echelon form}. It will be used in the sections \ref{subgroupconstruction} and \ref{main}.


\section{Characters of representations of affine type groups}\label{section 3}

In this section we answer in affirmative to the question b) in introduction for a certain type of groups. We call these groups \textbf{affine type groups}. They resemble affine Coxeter groups, and that is why the name.

\begin{definition}
We call a group $G$ an \textbf{affine type group}, or $A$-type group, if it has a finitely generated normal abelian subgroup $G_1\subset G$ such that the quotient $G/G_1$ is a finite group.	
\end{definition}

Of course, every finite group is of $A$-type ( in this case $G_1=\{1\}$). Also, non special affine Coxeter groups are of this type. A specific construction of an abelian normal subgroup for $\tilde{A}_n$ is given in section \ref{subgroupconstruction}. For groups $\tilde{B}_n, \ \tilde{C}_n$, and $\tilde{D}_n$ a descriptiopn of corresponding normal abelian subgroups could be found in \cite{PST}.

The following result is a direct corollary to Lemma \ref{Powers lemma} and Theorem \ref{character theorem}.

Let $G$ be an $A$-type group, \ $G_1$ be a normal abelian subgroup of $G$, \ $h_1,...,h_m\in G$ be a set of generators of $G_1$, and let $g_1,...,g_k\in G$ be a set which represents every conjugacy class of $G/G_1$, that is for every conjugacy class of $G/G_1$ there is $g_j$ such that $g_jG_1$ belongs to this class. Write 
\begin{equation}\label{K} 
\mathscr{K}=\{g_1,...,g_k,h_1,...,h_m,h_1^{-1},...,h_m^{-1}\} 
\end{equation}

\begin{theorem}\label{A-groups}
Let $G$ be an $A$-type group	,  $G_1\subset G$ be a normal abelian subgroup of $G$, and ${\mathscr K}$ be given by \eqref{K}. If two finite dimensional linear representations of $G$, \ $\rho_1$ and $\rho_2$, satify,
\begin{equation}\label{A type}
	\sigma_p^d({\mathscr K},\rho_1)=\sigma_p^d({\mathscr K},\rho_2), 
	\end{equation}
then
$$\chi_{\rho_1}=\chi_{\rho_2}. $$  
\end{theorem}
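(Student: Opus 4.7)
The plan is to exploit Lemma \ref{Powers lemma} applied to the tuple $\mathscr{K}$ in order to recover character values $\chi_\rho(g_i h)$ for each coset representative $g_i$ and each element $h \in G_1$, and then to use conjugation in $G$ to extend this to all elements of $G$. Combining this with Theorem \ref{character theorem} would give the equivalence $\rho_1 \sim \rho_2$ as a bonus, although the statement only asks for equality of characters.

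The first main step is to apply Lemma \ref{Powers lemma} to the tuples $(\rho_1(w))_{w\in\mathscr{K}}$ and $(\rho_2(w))_{w\in\mathscr{K}}$. For each multi-signature $\mathbf m$ in which the letter $g_i$ appears exactly once (and all other $g_j$'s zero times), the letter $h_l$ appears $\alpha_l$ times, and $h_l^{-1}$ appears $\beta_l$ times, the lemma equates the sums
\[ \sum_{\text{signature }\mathbf m} \mathrm{Tr}(\rho_1(w_{j_1})\cdots \rho_1(w_{j_N})) = \sum_{\text{signature }\mathbf m} \mathrm{Tr}(\rho_2(w_{j_1})\cdots \rho_2(w_{j_N})). \]
By cyclicity of the trace, every summand on either side equals the trace of a product in which the unique occurrence of $g_i$ is the leading letter, followed by some ordering of the $h_l^{\pm 1}$'s. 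Because the $h_l$'s all lie in the abelian subgroup $G_1$, every such reordering evaluates in $G$ to the same element $g_i\, h_1^{\alpha_1-\beta_1}\cdots h_m^{\alpha_m-\beta_m}$. Thus every summand has the same trace, the identity collapses to
\[ N(\alpha,\beta)\,\chi_{\rho_1}\bigl(g_i h_1^{\alpha_1-\beta_1}\cdots h_m^{\alpha_m-\beta_m}\bigr) = N(\alpha,\beta)\,\chi_{\rho_2}\bigl(g_i h_1^{\alpha_1-\beta_1}\cdots h_m^{\alpha_m-\beta_m}\bigr), \]
with $N(\alpha,\beta)$ the positive count of words with signature $\mathbf m$, and dividing out yields $\chi_{\rho_1}(g_i h) = \chi_{\rho_2}(g_i h)$ for every $i\in\{1,\dots,k\}$ and every $h\in G_1$ (every such $h$ can be written as $h_1^{n_1}\cdots h_m^{n_m}$, and each $n_l$ can be realized as $\alpha_l-\beta_l$ with nonnegative $\alpha_l,\beta_l$).

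The final step upgrades this to all of $G$. Given any $g\in G$, the coset $gG_1$ lies in the conjugacy class of some $g_i G_1$ in $G/G_1$, so there exist $u\in G$ and $h''\in G_1$ with $g=u g_i u^{-1} h''$. Then $u^{-1} g u = g_i (u^{-1} h'' u)$, and $u^{-1} h'' u\in G_1$ by normality, so $g$ is $G$-conjugate to an element of the form $g_i h'''$ with $h'''\in G_1$. Since characters are class functions, $\chi_{\rho_j}(g)=\chi_{\rho_j}(g_i h''')$ for $j=1,2$, and the previous step gives equality, completing the proof.

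The main obstacle I expect is the second step: disentangling the individual trace $\chi_\rho(g_i h)$ from the signature-indexed trace sum, since a priori many distinct words contribute to the same signature. The resolution requires using normality of $G_1$ together with its abelianness, plus cyclicity of the trace, to force all contributing words to collapse to a single group element, and this is precisely why the generating set $\mathscr{K}$ is designed to include both generators of $G_1$ and their inverses as well as representatives of conjugacy classes in $G/G_1$.
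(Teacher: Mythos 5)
Your proposal is correct and follows essentially the same route as the paper's proof: reduce an arbitrary $g\in G$ via normality of $G_1$ and class-function invariance to an element $g_jh$ with $h\in G_1$, then apply Lemma \ref{Powers lemma} to the signature with one occurrence of $g_j$ and $|\ell_r|$ occurrences of $h_r^{\mathrm{sign}(\ell_r)}$, using cyclicity of the trace together with commutativity of $G_1$ to collapse the signature sum to a positive multiple of a single character value. The only cosmetic difference is that the paper passes to the sub-tuple $\{g_j,h_1^{\mathrm{sign}(\ell_1)},\dots,h_m^{\mathrm{sign}(\ell_m)}\}$ before invoking the lemma, whereas you keep the full tuple $\mathscr{K}$ and select the signature that vanishes on the unused letters.
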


\begin{proof}
Let $g\in G$. There are $j\in \{1,...,k\}$, $w\in G$ and $u\in G_1$ such that $g=wg_jw^{-1}	u$.
Since circular transformations preserve conjugacy classes, we have
$$g \sim g_j\Big(w^{-1}uw\Big) .$$
Since $G_1$ is normal $w^{-1}uw\in G_1$, and, therefore, there are $\ell_1,...,\ell_m \in {\mathbb Z}$ such that 
$$w^{-1}uw=h_1^{\ell_1}...h_m^{\ell_m}, \ \chi_{\rho_i}(g)=\chi_{\rho_i} (g_jh_1^{\ell_1}...h_k^{\ell_k}), \ i=1,2.$$
Write ${\mathcal K}(j,\ell_1,...,\ell_m)=\{g_j,h_1^{sign(\ell_1)},...,h_k^{sign(\ell_m)}\} $. Equation \eqref{A type} implies that
$$\sigma_p^d\Big( {\mathcal K}\big(j,\ell_1,...,\ell_m\big),\rho_1\Big)=\sigma_p^d\Big( {\mathcal K}\big(j,\ell_1,...,\ell_m\big),\rho_2\Big). $$
Consider a word consisting of 1 letter $g_j$ and $|\ell_r|$ letters  $h_r^{sign(\ell_r)}, \ r=1,...,m$. This word has  form $w_1g_jw_2$, where $w_1$ and $w_2$ are words comprised of letters $h_r^{sign(\ell_r)}, \ r=1,...,m$ with the total number of $h_r^{sign(\ell_r)} $ in both words being equal to $|\ell_r|$. Using a circular transformation we see that
 $$w_1g_jw_2 \sim g_jw_2w_1=g_jh_1^{\ell_1}...h_k^{\ell_m},$$
where the last equality follows from the fact that $G_1$ is abelian. It now follows from Lemma \ref{Powers lemma} that $\chi_{\rho_1}(g)=\chi_{\rho_2}(g).$
\end{proof}

\vspace{.2cm}

Of course, we can chose $g_1,...,g_k$ by picking up one element from each coset of $G/G_1$ and obtain the following    
\begin{corollary}
Let $G$ be an $A$-type group, $G_1$ be a finitely generated abelian normal subgroup such that $G/G_1$ is finite, and $h_1,...,h_m$ be generators of $G_1.$ If
$$\sigma_p^d\bigg( \Big(\faktor{G}{G_1},h_1,...,h_m,h_1^{-1},...,h_m^{-1}\Big),\rho_1\bigg)=\sigma_p^d\bigg( \Big(\faktor{G}{G_1},h_1,...,h_m,h_1^{-1},...,h_m^{-1}\Big),\rho_2\bigg),$$	
then
$$\chi_{\rho_1}=\chi_{\rho_2}. $$
\end{corollary}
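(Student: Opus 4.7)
The plan is to derive this corollary directly from Theorem \ref{A-groups}, with the only real work being to show that the hypothesis given here implies the hypothesis there. I would proceed in three short steps.

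First, I would verify that the tuple appearing in the corollary already contains the set $\mathscr{K}$ from \eqref{K}. Since every conjugacy class of $G/G_1$ is a union of cosets of $G_1$, a set of representatives $g_1,\dots,g_k$ of the conjugacy classes of $G/G_1$ can be selected from among the given coset representatives of $G/G_1$. Writing $L$ for the full tuple listed in the hypothesis of the corollary, we obtain $\mathscr{K}\subseteq L$.

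Second, I would show that divisor equality on $L$ restricts to divisor equality on any subtuple, in particular on $\mathscr{K}$. By construction, $\sigma_p^d(L,\rho_i)$ is the zero-divisor of the polynomial $P_i(x)=\det\bigl(\sum_{\ell\in L} x_\ell\,\rho_i(\ell) - I\bigr)$, so divisor equality forces $P_1 = c\,P_2$ for some nonzero scalar $c$. Evaluating at $x=0$ yields $P_i(0)=(-1)^{\dim V_i}$, which pins down both $\dim V_1=\dim V_2$ and $c=1$, hence $P_1=P_2$ as polynomials. Setting the variables indexed by $\ell\in L\setminus\mathscr{K}$ to zero then gives equality of the corresponding determinantal polynomials for the subtuple $\mathscr{K}$, so $\sigma_p^d(\mathscr{K},\rho_1)=\sigma_p^d(\mathscr{K},\rho_2)$.

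Third, I would invoke Theorem \ref{A-groups} with this $\mathscr{K}$ to conclude $\chi_{\rho_1}=\chi_{\rho_2}$.

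The main (and essentially only) obstacle is the polynomial-versus-divisor subtlety in the second step, which is resolved cleanly by the evaluation at the origin; everything else is a matter of recognizing that the tuple in the corollary is just a convenient enlargement of the tuple $\mathscr{K}$ used in the theorem.
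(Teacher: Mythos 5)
Your proof is correct, and it reaches the conclusion by the intended route (reduction to Theorem \ref{A-groups}), but you take a detour the paper does not need. The paper's observation is simply that a full set of coset representatives of $\faktor{G}{G_1}$ is itself an admissible choice of $g_1,\dots,g_k$ in the construction of $\mathscr{K}$ in \eqref{K}: the theorem only asks that every conjugacy class of $\faktor{G}{G_1}$ contain some $g_jG_1$, and a transversal trivially does this. So the tuple in the corollary \emph{is} a set $\mathscr{K}$ of the form \eqref{K} (with $k=|G/G_1|$), and the theorem applies verbatim with no need to pass to a subtuple. Your version instead thins the transversal down to one representative per conjugacy class and then proves a restriction lemma: equality of $\sigma_p^d$ for a tuple implies equality for every subtuple. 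That lemma is true, and your proof of it is sound --- equal divisors force $P_1=cP_2$, the constant term $\det(-I)=(-1)^{\dim V_i}$ and the leading term in any single variable (nonzero since the $\rho_i(\ell)$ are invertible) pin down $\dim V_1=\dim V_2$ and $c=1$, and specializing variables to $0$ preserves the identity because the constant term keeps the restricted polynomials from vanishing identically. In fact this restriction principle is used implicitly inside the paper's proof of Theorem \ref{A-groups} itself (in the passage from $\mathscr{K}$ to $\mathcal{K}(j,\ell_1,\dots,\ell_m)$), so making it explicit has independent value; it is just not required for this particular corollary.
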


For some groups the set $g_1,...,g_k$ in Theorem \ref{A-groups} can be reduced to a smaller set which does not represent all conjugacy classes of $G/G_1$, but the same conclusion  still holds. This is true for non-special finite Coxeter groups (see \cite{CST}) and for affine Coxeter groups of types $\tilde{B}, \ \tilde{C}$, and $\tilde{D}$ (see \cite{PST}). In the rest of this paper we will show that a similar reduction is possible for the affine Coxeter groups of type $\tilde{A}$.

\section{Special elements and combinatorics of $\tilde{A}_n$}\label{subgroupconstruction}

\subsection{Combinatorics of $\tilde{A}_n$}
We build up the elements in $\tilde{A}_n$ to form generators of a normal abelian subgroup.  Consider the following elements: 
\begin{align*}
p_1 &= a_2a_3\cdots a_na_{n+1}\\
p_2 &= a_3a_4\cdots a_na_{n+1}a_1\\
& \ \, \vdots \\
p_i &= a_{i+1}\cdots a_{n}a_{n+1}a_1\cdots a_{i-1}\\
& \ \, \vdots &\\ 
p_n &= a_{n+1}a_1\cdots a_{n-2}a_{n-1}\\
p_{n+1} &= a_1\cdots a_n
\end{align*}

Because of the circular structure of the Coxeter diagram, we will use  $mod(n+1)$ arithmetic whenever we are dealing with indices.

First, we establish basic commuting relations between the generating elements $a_i$ and the elements $p_j$. 

\begin{proposition} \label{ai pi relations}
Let $1 \leq i,j \leq n+1$, then 
$$a_ip_j = \left\{ \begin{matrix} p_{j-1}a_{i-1} & \text{if} \ i = j \\
p_{j+1}a_{i-1} & \text{if} \  j+1 = i \\ 
p_ja_{i-1} & \text{otherwise}
\end{matrix} \right.$$
where, as mentioned above, all indexes are considered $mod(n+1)$.
\end{proposition}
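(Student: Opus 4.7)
The plan is to verify all three cases by direct word manipulation using only the defining relations of $\tilde{A}_n$: the involutions $a_k^2=1$, the commutation $a_k a_\ell = a_\ell a_k$ when $|k-\ell| \not\equiv 1 \pmod{n+1}$, and the braid relation $a_k a_{k+1} a_k = a_{k+1} a_k a_{k+1}$ between cyclically adjacent generators. All indices are read modulo $n+1$.

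The cases $i=j$ and $i=j+1$ are immediate bookkeeping. If $i=j$, then $a_j$ is absent from $p_j = a_{j+1}\cdots a_{j-1}$, so both $a_j p_j$ and $p_{j-1} a_{j-1} = (a_j a_{j+1}\cdots a_{j-2}) a_{j-1}$ are literally the same word $a_j a_{j+1}\cdots a_{j-1}$. If $i=j+1$, then the leading $a_i$ of $a_i p_j$ cancels the first letter of $p_j$ via $a_i^2=1$ to give $a_{j+2}\cdots a_{j-1}$; and the trailing pair in $p_{j+1} a_{i-1} = (a_{j+2}\cdots a_j) a_j$ cancels via $a_j^2=1$ to the same word.

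For the main case $i \neq j, j+1$, both $a_{i-1}$ and $a_i$ appear in $p_j$ as consecutive letters, so I would split $p_j = (a_{j+1}\cdots a_{i-2})\,a_{i-1}\,a_i\,(a_{i+1}\cdots a_{j-1})$, allowing either parenthesized block to be empty. The reduction proceeds in three moves: (i) commute the leading $a_i$ rightward past the block $a_{j+1}\cdots a_{i-2}$; (ii) apply the braid identity $a_i a_{i-1} a_i = a_{i-1} a_i a_{i-1}$ to the central triple; (iii) commute the new trailing $a_{i-1}$ rightward past the block $a_{i+1}\cdots a_{j-1}$. The result is $p_j \, a_{i-1}$, as claimed.

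The only point that needs care, and the main obstacle I expect, is the cyclic bookkeeping: one must check that for every $j$ the two blocks $\{j+1,\ldots,i-2\}$ and $\{i+1,\ldots,j-1\}$ read modulo $n+1$ really do avoid the neighbors $\{i-1, i+1\}$ of $i$ and the neighbors $\{i-2, i\}$ of $i-1$ respectively, so that steps (i) and (iii) are legal. This holds because these two blocks are exactly the open arcs of the cyclic Coxeter graph cut out by the vertex $j$ and the edge $\{i-1,i\}$, and the hypotheses $i\neq j, j+1$ force each arc to terminate at cyclic distance at least two from $a_i$ and $a_{i-1}$; consequently every commutation used is justified uniformly, regardless of whether the block wraps past index $n+1$.
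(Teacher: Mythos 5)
Your proof is correct and follows essentially the same route as the paper's: the cases $i=j$ and $i=j+1$ by direct cancellation, and the generic case by commuting $a_i$ inward to meet the consecutive pair $a_{i-1}a_i$ inside $p_j$, applying the braid relation, and commuting the resulting $a_{i-1}$ out to the right. Your uniform treatment of the cyclic bookkeeping actually covers the subcase $i>j+1$ that the paper leaves to the reader.
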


\begin{proof}
(Case 1) $i=j$
We have 
\begin{align*}
a_ip_i & = a_i (a_{i+1}\cdots a_{n}a_{n+1}a_1\cdots a_{i-1})\\
& = (a_ia_{i+1}\cdots a_{n}a_{n+1}a_1\cdots a_{i-2}) a_{i-1}\\
& = p_{i-1}a_{i-1}
\end{align*}

(Case 2) $ j+1=i \mod (n+1) $
Then again by definition:
\begin{align*}
a_ip_j & = a_i(a_ia_{i+1}\cdots a_{i-2}) \\
& = a_{i+1} \cdots a_{i-2} \\
& = a_{i+1} \cdots a_{i-2}\cdot a_{i-1} a_{i-1}\\
&= p_{j+1}a_{i-1}
\end{align*}

(Case 3) $i < j$.
In this case, we use the relation: 
$$a_ia_{i+1}a_i = a_{i+1}a_ia_{i+1}$$
which follows from $(a_ia_{i+1})^3 = 1$.  
Now we have: 
\begin{align*}
a_ip_j &= a_i(a_{j+1}\cdots a_{n+1}a_1a_2\cdots a_{i-1}a_ia_{i+1}\cdots a_{j-1})\\
&= a_{j+1}\cdots a_{n+1}a_1a_2\cdots{\bf a_i} a_{i-1}a_ia_{i+1}\cdots a_{j-1}\\
&= a_{j+1}\cdots a_{n+1}a_1a_2\cdots{\bf a_{i-1}a_{i}a_{i-1}}a_{i+1}\cdots a_{j-1}\\
&= a_{j+1}\cdots a_{n+1}a_1a_2\cdots a_{i-1}a_ia_{i+1}\cdots a_{j-1} {\bf a_{i-1}}.
\end{align*}

The case $i > j+1$ is handled in a similar way and is left out. 
\end{proof}

\vspace{.3cm}

Passing to the inverses  in relations of Proposition \ref{ai pi relations} and recalling $a_i^{-1} = a_i$ we get
\begin{corollary} \label{ai pi inverse relations}
Let $1 \leq i,j \leq n+1$, then 
$$a_ip_j^{-1} = \left\{ \begin{matrix} p_{j+1}^{-1}a_{i+1} & \text{if} \ i = j \\
p_{j-1}^{-1}a_{i+1} & \text{if} \  j+1 = i \\ 
p_j^{-1}a_{i+1} & \text{otherwise}
\end{matrix} \right.$$

\end{corollary}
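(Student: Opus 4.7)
The plan is to derive the Corollary directly from Proposition \ref{ai pi relations} by inverting each of the three identities there. Each case of Proposition \ref{ai pi relations} has the shape $a_i p_j = p_{j'} a_{i-1}$, where $j' \in \{j-1,\,j+1,\,j\}$ depending on whether $i=j$, $j+1=i$, or neither. Applying the anti-automorphism $x \mapsto x^{-1}$ to such a relation and using that each Coxeter generator is an involution yields
$$p_j^{-1} a_i = a_{i-1}\, p_{j'}^{-1},$$
and then left-multiplying by $a_{i-1}$ (and right-multiplying by $a_i$, again invoking $a_k^{-1}=a_k$) rearranges this to
$$a_{i-1}\, p_j^{-1} = p_{j'}^{-1}\, a_i.$$

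The final step is a simple relabeling: setting $i \mapsto i+1$ puts the identity in the form $a_i\, p_j^{-1} = p_{j'}^{-1}\, a_{i+1}$ required by the Corollary, with the value of $j'$ determined by whichever case of Proposition \ref{ai pi relations} was invoked, now re-expressed in the shifted indices. Matching up the three resulting cases against the three cases in the Corollary is a short bookkeeping exercise carried out $\bmod (n+1)$ on the circular diagram, and this is the only point where any care is needed. No new group-theoretic input enters beyond the braid and commutation relations already used in proving Proposition \ref{ai pi relations}, which is precisely why the statement is packaged as a corollary rather than a standalone proposition.
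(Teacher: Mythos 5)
Your overall strategy --- invert each case of Proposition \ref{ai pi relations}, use $a_k^{-1}=a_k$, and reindex --- is exactly what the paper does (its entire proof is the one sentence ``Passing to the inverses\dots''), so the method is not in question. The genuine gap is in the step you dismiss as ``a short bookkeeping exercise'': if you actually carry it out, the cases do \emph{not} match the Corollary as printed. Inverting case 1 of the Proposition, $a_ip_i=p_{i-1}a_{i-1}$, gives $a_{i-1}p_{i-1}^{-1}=p_i^{-1}a_i$, which is the first line of the Corollary; but inverting case 2, $a_{j+1}p_j=p_{j+1}a_j$, gives $a_jp_{j+1}^{-1}=p_j^{-1}a_{j+1}$, so the exceptional value $p_{j-1}^{-1}a_{i+1}$ occurs when $j=i+1$, not when $j+1=i$ as stated (and the ``otherwise'' clause shifts accordingly). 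A concrete check in $\tilde{A}_2$: with $p_1=a_2a_3$ and $p_3=a_1a_2$, one has $a_2p_1^{-1}=a_2a_3a_2=a_3a_2a_3=p_1^{-1}a_3$, i.e.\ the ``otherwise'' formula, whereas the printed second case ($j+1=i$ with $j=1$, $i=2$) would assert $a_2p_1^{-1}=p_3^{-1}a_3=a_2a_1a_3$, which is false since it forces $a_3a_2a_3=a_1$. So either your ``matching up'' silently fails, or --- more likely --- the Corollary's middle condition is transposed and should read $i+1=j$; a correct write-up must resolve this rather than assert that the cases align.

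A smaller point: your rearrangement (left-multiply by $a_{i-1}$, then right-multiply by $a_i$) turns $p_j^{-1}a_i=a_{i-1}p_{j'}^{-1}$ into $a_{i-1}p_j^{-1}=p_{j'}^{-1}a_i$, which is a \emph{different} (though also valid) identity from the one obtained by simply reading the inverted equation right-to-left, namely $a_{i-1}p_{j'}^{-1}=p_j^{-1}a_i$. The two readings permute which case of the Proposition feeds which case of the Corollary; both routes terminate at the same three relations, so this is not an error, but it doubles the opportunities for precisely the index slip identified above --- the one piece of content this proof cannot skip. Since Corollary \ref{ai pi inverse relations} feeds into Proposition \ref{ai gj relations} and Corollary \ref{commutations delta and g}, the corrected conditions should be checked against those statements as well.
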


\vspace{.3cm}

\begin{proposition} \label{pi relations}
Let $1 \leq i,j \leq n+1$, then  
$$p_ip_j = p_{j+1}p_{i-1}$$
\end{proposition}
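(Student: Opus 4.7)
The plan is to push the letters of $p_i$ through $p_j$ one at a time from right to left, using Proposition \ref{ai pi relations}. Writing $p_i = a_{i+1}a_{i+2}\cdots a_{i+n}$ with indices mod $n+1$, each such push replaces a letter $a_c$ to the left of the current $p_\ell$ by $a_{c-1}$ to its right, possibly shifting $\ell$ by $\pm 1$. After $s$ steps the product has the form
\[
(a_{i+1}\cdots a_{i+n-s})\,p_{j_s}\,(a_{i+n-s}\cdots a_{i+n-1}),
\]
and after all $n$ pushes I obtain $p_{j_n}\,(a_i a_{i+1}\cdots a_{i+n-1}) = p_{j_n}\,p_{i-1}$, the trailing product being $p_{i-1}$ by definition. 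So the claim reduces to proving $j_n \equiv j+1 \pmod{n+1}$.

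To track $j_s$, let $c_s = i+n+1-s$ denote the index of the letter pushed at step $s$. Proposition \ref{ai pi relations} yields the recursion
\[
j_s - j_{s-1} = \begin{cases} -1, & c_s \equiv j_{s-1} \pmod{n+1},\\ +1, & c_s \equiv j_{s-1}+1 \pmod{n+1},\\ 0, & \text{otherwise.}\end{cases}
\]
The key structural point is that $\{c_1,\dots,c_n\}$ is exactly the set of residues mod $n+1$ other than $i$, enumerated in strictly decreasing order, so each allowed residue appears exactly once and the value $j+1$ occurs one step before the value $j$.

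I would finish via a short case analysis on $j \pmod{n+1}$. If $j \equiv i-1$, an induction shows $c_s \equiv j_{s-1}$ at every step, so $j_s = j-s$ and thus $j_n \equiv j-n \equiv j+1$. If $j \equiv i$, then $j$ never appears among the $c_s$ and $j+1$ appears only at $s=n$, so $j_s = j$ for $s<n$ and a single upward jump produces $j_n = j+1$. In the remaining generic case, $j_s = j$ until step $s_{j+1} := i+n-j$, where $c_{s_{j+1}} = j+1$ triggers the unique upward shift, after which the state is frozen at $j+1$. The main obstacle is this last freezing claim: one must check that once $j_s = j+1$, neither of the potential triggers $a_{j+1}$ nor $a_{j+2}$ can reappear in $(c_s)$, since $a_{j+1}$ was just consumed at step $s_{j+1}$ and $a_{j+2}$ was consumed at the earlier step $s_{j+1}-1$. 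The degenerate boundary subcases (e.g.\ when $j+2 \equiv i$ so that $a_{j+2}$ never appears at all, or when $s_{j+1}=1$) require only a one-line separate check and reduce to the behavior already analyzed in the first two cases.
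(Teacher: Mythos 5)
Your proof is correct and takes essentially the same route as the paper's: both commute $p_j$ past the letters of $p_i$ one at a time via Proposition \ref{ai pi relations} and track the single shift of the subscript from $j$ to $j+1$ occurring at the letter $a_{j+1}$. Your recursion for $j_s$ and the ``freezing'' check merely make explicit the bookkeeping (that no further trigger $a_{j+1}$ or $a_{j+2}$ occurs after the shift) which the paper's displayed computation leaves implicit, and your treatment of $j\equiv i-1$ recovers the case the paper dismisses as trivial.
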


\begin{proof}
We split the proof into two cases.

(Case 1) $j = i-1 \mod (n+1)$  The equality holds trivially.

(Case 2) $j \neq i-1 \ mod(n+1).$
Then either $1 \leq j < i-1$, or $i+1 \leq j \leq n+1$.  Both cases are treated similarly so we give a proof for  the first one.
\begin{align*}
p_ip_j &= a_{i+1}\cdots a_na_{n+1}a_1\cdots a_ja_{j+1}\cdots a_{i-2}a_{i-1} p_j\\
& = a_{i+1}\cdots a_na_{n+1}a_1\cdots a_ja_{j+1}\cdots a_{i-2}p_ja_{i-2}
\end{align*} 
Using Proposition \ref{ai pi relations} we continue to move $p_j$ to the left and get

\begin{align*}
& = a_{i+1}\cdots a_na_{n+1}a_1\cdots a_ja_{j+1}\cdots a_{i-2}{\bf p_j}a_{i-2}\\
&= a_{i+1}\cdots a_na_{n+1}a_1\cdots a_ja_{j+1}\cdots {\bf p_j}a_{i-3}a_{i-2}\\
& \ \, \vdots \\
&= a_{i+1}\cdots a_na_{n+1}a_1\cdots a_ja_{j+1}{\bf p_j}\cdots a_{i-3}a_{i-2}\\
&=a_{i+1}\cdots a_na_{n+1}a_1\cdots a_j{\bf p_{j+1}}a_{j}\cdots a_{i-3}a_{i-2},\\
& = p_{j+1} a_i a_{i+1}\cdots a_{n+1}\cdots a_{i-1}\\
& = p_{j+1}p_{i-1}
\end{align*}
 
(remind that all indexes are $mod(n+1)$ numbers).  This finishes the proof. 

\end{proof}
Again, passing to inverses we obtain
\begin{corollary} \label{inverse pi relations}
The following relations hold:
$$p_j^{-1}p_i^{-1} = p_{i-1}^{-1}p_{j+1}^{-1} \quad  p_j^{-1}p_i = p_{i-1}p_{j-1}^{-1}$$
\end{corollary}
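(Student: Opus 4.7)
The plan is to derive both relations directly from Proposition \ref{pi relations} by algebraic manipulation, since the corollary is stated as a routine consequence.

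For the first identity, I would simply invert both sides of the relation $p_ip_j = p_{j+1}p_{i-1}$ from Proposition \ref{pi relations}. Using the general fact that $(xy)^{-1} = y^{-1}x^{-1}$, this yields $p_j^{-1}p_i^{-1} = p_{i-1}^{-1}p_{j+1}^{-1}$, which is exactly the first claimed relation.

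For the second identity $p_j^{-1}p_i = p_{i-1}p_{j-1}^{-1}$, I would reindex Proposition \ref{pi relations} by replacing $j$ with $j-1$ (indices being taken $\bmod(n+1)$), which gives
\[
p_i p_{j-1} = p_{j} p_{i-1}.
\]
Multiplying this equation on the left by $p_j^{-1}$ and on the right by $p_{j-1}^{-1}$ cancels $p_j$ on the right-hand side and $p_{j-1}$ on the left-hand side, producing $p_j^{-1} p_i = p_{i-1} p_{j-1}^{-1}$, as desired.

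There is no real obstacle here: both identities are elementary rewrites of Proposition \ref{pi relations}. The only minor point to be careful about is that all indices are interpreted modulo $n+1$, so the shift $j \mapsto j-1$ is well-defined and the equations continue to make sense for all $1 \le i,j \le n+1$.
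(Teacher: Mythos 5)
Your proposal is correct and matches the paper's (very terse) justification, which simply says the corollary follows by passing to inverses in Proposition \ref{pi relations}; your explicit reindexing $j\mapsto j-1$ and cancellation for the second identity fills in exactly the small amount of algebra the paper leaves implicit.
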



\subsection{Construction of a normal abelian subgroup}

 Consider the set
 $$P = \{p_j^{-1}p_i \mid 1 \leq i,j \leq n+1\}.$$
 Proposition \ref{ai pi relations} and Corollary \ref{ai pi inverse relations} imply that for every $k,i,j$ there exist $m$ and $l$ such that
$$ a_k(p_j^{-1}p_i) = p_{m}^{-1}a_{k+1}p_i =  p_{m}^{-1}p_{l}a_{k},$$
 so the subgroup generated by the set $P$ is normal.  Let $N = <P>$. It is easily seen that $N$ is generated by $(n+1)$ elements $p_j^{-1}p_{j+1}, \ 1\leq j\leq n+1$, as, if $i<j$,
 then
 $$p_i^{-1}p_j=(p_i^{-1}p_{i+1})(p_{i+1}^{-1}p_{i+2})\cdots  (p_{j-1}^{-1}p_j),$$
 and if $i>j$, then
 $$p_i^{-1}p_j=\big( p_{i-1}^{-1}p_i\big)^{-1}\big(p_{i-2}^{-1}p_{i-1}\big)^{-1}\cdots \big( p_j^{-1}p_{j+1}\big)^{-1}. $$
 
  Write 
  \begin{equation}\label{g}
  	g_j = p_j^{-1}p_{j+1}, \ 1 \leq j \leq n.
  	\end{equation} 
  The following Proposition follows directly from Proposition \ref{ai pi relations} and Corollary \ref{ai pi inverse relations}.  
 \begin{proposition} \label{ai gj relations}
 For each $1 \leq j \leq n$, and $k \neq j-1,j,j+1$ 
 $$a_kg_j = g_ja_k.$$
 Also, the following relations hold: 
 $$\begin{matrix} a_{i-1}g_i = g_{i-1}g_ia_{i-1} & a_ig_i = g_i^{-1}a_i & a_{i+1}g_i = g_ig_{i+1}a_{i+1} \end{matrix},$$
 
 where all indexes are considered $mod(n+1)$.
 \end{proposition}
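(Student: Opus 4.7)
The plan is to verify each of the four relations by a direct two-step computation. Since $g_j = p_j^{-1} p_{j+1}$, write
\[
a_k g_j \;=\; a_k p_j^{-1} p_{j+1},
\]
apply Corollary \ref{ai pi inverse relations} to slide $a_k$ past $p_j^{-1}$, and then apply Proposition \ref{ai pi relations} to slide the resulting $a$-generator past $p_{j+1}$. In each of the index regimes $k \neq j-1,j,j+1$ and $k \in \{j-1, j, j+1\}$ (with $j = i$ in the latter), specific branches of these two statements are triggered, and the remaining product of $p^{\pm 1}$'s reassembles into a product of $g_l$'s.

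For the commuting case $k \neq j-1, j, j+1$, both slides fall in the ``otherwise'' branch, giving $a_k p_j^{-1} = p_j^{-1} a_{k+1}$ and $a_{k+1} p_{j+1} = p_{j+1} a_k$, so $a_k g_j = g_j a_k$ at once. The three exceptional relations are handled analogously. When $k = i$, Case 1 of Corollary \ref{ai pi inverse relations} followed by Case 1 of Proposition \ref{ai pi relations} yields $p_{i+1}^{-1} p_i a_i = g_i^{-1} a_i$. When $k = i-1$, Case 2 of Corollary \ref{ai pi inverse relations} (producing $p_{i-1}^{-1} a_i$) is followed by the otherwise branch of Proposition \ref{ai pi relations}, and the result $p_{i-1}^{-1} p_{i+1} a_{i-1}$ equals $g_{i-1} g_i a_{i-1}$ after inserting the trivial factor $p_i p_i^{-1}$ between the two $p$'s. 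When $k = i+1$, the otherwise branch of Corollary \ref{ai pi inverse relations} is followed by Case 2 of Proposition \ref{ai pi relations}, yielding $p_i^{-1} p_{i+2} a_{i+1}$, which equals $g_i g_{i+1} a_{i+1}$ after inserting $p_{i+1} p_{i+1}^{-1}$.

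The entire argument is mechanical; the only mild bookkeeping issue is tracking indices modulo $n+1$ and correctly identifying which branch of Proposition \ref{ai pi relations} or Corollary \ref{ai pi inverse relations} applies at each step. There is no genuine obstacle, since the preceding propositions have already done the substantive work of expressing all $a_i p_j^{\pm 1}$-products in the required normal form.
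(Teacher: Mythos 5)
Your proof is correct and is exactly the computation the paper has in mind: the paper states Proposition \ref{ai gj relations} without proof, as a direct consequence of Proposition \ref{ai pi relations} and Corollary \ref{ai pi inverse relations}, and your four case-by-case two-step slides (through $p_j^{-1}$ and then $p_{j+1}$, reassembling the leftover $p^{\pm1}$'s into $g$'s) carry that out, with all resulting formulas checking out. One small remark: in the adjacent-index cases you implicitly read the middle branch of Corollary \ref{ai pi inverse relations} as applying when $j=i+1$ (giving $a_i p_{i+1}^{-1}=p_i^{-1}a_{i+1}$) rather than under the printed condition $j+1=i$; that is the correct reading (the printed condition appears to have the indices swapped), and your branch selections are the right ones.
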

 
  As a direct corollary to Proposition \ref{ai gj relations} we obtain the following result.
  
 \begin{corollary} \label{commutations delta and g}
 Let $\Delta$ be an $A_n$ echelon block with $|\Delta|>1$.  If $\Delta = a_pa_{p+1}\cdots a_{k-1}a_k$, then the following commutation relations hold
 \begin{center}
 \begin{tabular}{c c}
 $\Delta g_j = g_{j+1}\Delta$ & if $p \leq j < k$ \\
$\Delta g_k = g_p^{-1}g_{p+1}^{-1}\cdots g_k^{-1}  \Delta$ & if $j=k$ \\
$g_j\Delta = \Delta g_{j-1}$ & if $p<j \leq k$  \\ 
$g_p\Delta = \Delta g_p^{-1}\cdots g_k^{-1}$ & if $j=p$ \\
$g_j\Delta = \Delta g_j$ & if $j < p-1$ or $j>k+1$
 \end{tabular}
 \end{center}
 \end{corollary}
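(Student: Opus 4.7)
The strategy is to push $g_j$ (or $g_j^{\pm 1}$) through $\Delta=a_pa_{p+1}\cdots a_k$ one letter at a time, using the atomic commutation relations supplied by Proposition \ref{ai gj relations}. I would split the argument by cases according to the position of $j$ relative to the interval $[p,k]$, and proceed by induction on $|\Delta|=k-p+1$; the base case $|\Delta|=2$ is handled by direct application of the three non-trivial relations in Proposition \ref{ai gj relations}.

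The "commuting" cases ($j<p-1$ or $j>k+1$) are immediate: every $a_i$ with $i\in[p,k]$ has $i\notin\{j-1,j,j+1\}$, so each $a_i$ commutes with $g_j$ by Proposition \ref{ai gj relations}, and hence $g_j$ commutes with $\Delta$. In the two "generic" cases ($p\le j<k$ for $\Delta g_j$, and $p<j\le k$ for $g_j\Delta$), I would trace the motion of $g_j$: for $\Delta g_j$ with $p\le j<k$, $g_j$ commutes past $a_k,\ldots,a_{j+2}$, meets $a_{j+1}$ to produce $g_jg_{j+1}a_{j+1}$, and when the pair $g_jg_{j+1}$ meets $a_j$ the identities $a_jg_j=g_j^{-1}a_j$ and $a_jg_{j+1}=g_jg_{j+1}a_j$ collapse the pair to a single $g_{j+1}$, which then commutes past $a_{j-1},\ldots,a_p$ since those indices lie outside $\{j,j+1,j+2\}$. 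The case $g_j\Delta$ with $p<j\le k$ is symmetric, using the dual identities (obtained by conjugating the relations of Proposition \ref{ai gj relations} by the relevant $a_i$) to push $g_j$ left-to-right through $\Delta$.

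The delicate cases are $j=k$ (for $\Delta g_k$) and $j=p$ (for $g_p\Delta$), where a new inverted generator accumulates at each step. Starting from $a_kg_k=g_k^{-1}a_k$ and using the identity $a_{r-1}g_r^{-1}=g_r^{-1}g_{r-1}^{-1}a_{r-1}$, obtained by inverting the relation $a_{r-1}g_r=g_{r-1}g_ra_{r-1}$ of Proposition \ref{ai gj relations} and conjugating by $a_{r-1}$, iteration through the letters of $\Delta$ from right to left yields an expression of the form $g_k^{-1}g_{k-1}^{-1}\cdots g_p^{-1}\Delta$. The main obstacle is that this ordering is the reverse of the one recorded in the statement, $g_p^{-1}g_{p+1}^{-1}\cdots g_k^{-1}\Delta$, so matching them requires knowing that distinct $g_i$'s mutually commute, i.e., that the subgroup $N$ is abelian. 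This commutativity needs either to be invoked from a prior lemma or proved en route using Proposition \ref{pi relations} and Corollary \ref{inverse pi relations}, which by a short manipulation of the $p_i$'s yield $g_ig_l=g_lg_i$ for all relevant indices and thereby justify the reordering.
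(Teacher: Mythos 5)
Your proposal is correct and follows essentially the same route as the paper: a direct letter-by-letter push of $g_j$ through $\Delta$ using the relations of Proposition \ref{ai gj relations}, with the conjugated identity $a_{r-1}g_r^{-1}=g_r^{-1}g_{r-1}^{-1}a_{r-1}$ driving the accumulation of inverses in the $j=k$ and $j=p$ cases. Your observation that the final reordering $g_k^{-1}\cdots g_p^{-1}=g_p^{-1}\cdots g_k^{-1}$ requires the mutual commutativity of the $g_i$'s is a fair one: the paper performs this reordering silently and only records that $N$ is abelian afterwards in Theorem \ref{normal}, although the needed commutativity does follow from Corollary \ref{inverse pi relations}, which is already available at this point.
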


\begin{proof}
  For $j<p-1$ or $j>k+1$ then by Proposition \ref{ai gj relations}, $g_j$ commutes with every element in $\Delta$ so we get
$$g_j\Delta = \Delta g_j.$$

If $p\leq j < k$, then
\begin{align*}
\Delta g_j &= a_p \cdot a_ja_{j+1}\cdots a_kg_j \\
&= a_p\cdots a_ja_{j+1}g_j\cdots a_k \\
&= a_p\cdots a_jg_jg_{j+1}a_{j+1}\cdots a_k \\ 
&= a_p\cdots g_j^{-1}a_jg_{j+1}a_{j+1}\cdots a_k \\
&= a_p\cdots g_j^{-1}g_jg_{j+1}a_ja_{j+1}\cdots a_k\\
&= a_p\cdots g_{j+1}a_ja_{j+1}\cdots a_k \\ 
&= g_{j+1}a_p \cdots a_k\\
&= g_{j+1}\Delta
\end{align*}

If $j = k$, then
\begin{align*}
\Delta g_k &= a_p\cdots a_{k-1}a_k g_k \\
&= a_p\cdots a_{k-1}g_k^{-1}a_k \\
&=a_p\cdots a_{k-2}g_k^{-1}g_{k-1}^{-1}a_{k-1}a_k\\
&= g_k^{-1}a_p\cdots a_{k-2}g_{k-1}^{-1}a_{k-1}a_k\\
&= g_k^{-1}a_p\cdots g_{k-1}^{-1}g_{k-2}^{-1}a_{k-2}a_{k-1}a_k\\
&\vdots \\ 
&= g_k^{-1}g_{k-1}^{-1}\cdots g_{p+2}^{-1}a_pg_{p+1}^{-1}a_{p+2}\cdots a_k\\
&= g_p^{-1}\cdots g_k^{-1} \Delta
\end{align*}

If $p < j \leq k$ we have
\begin{align*}
g_j\Delta &= g_ja_p \cdot a_{j-1}a_j\cdots a_k \\
&= a_p\cdots a_{j-2}g_ja_{j-1}a_j\cdots a_k \\
&= a_p\cdots a_{j-1}g_jg_{j-1}a_j\cdots a_k \\ 
&= a_p\cdots a_{j-1}g_ja_jg_jg_{j-1}\cdots a_k \\
&= a_p\cdots a_{j-1}a_jg_j^{-1}g_jg_{j-1}\cdots a_k\\
&= a_p\cdots a_{j-1}a_jg_{j-1}\cdots a_k \\ 
&= a_p \cdots a_kg_{j-1}\\
&= \Delta g_{j-1}
\end{align*}

If $j=p$, then
\begin{align*}
g_p\Delta &= g_pa_pa_{p+1}\cdots a_k \\ 
&= a_pg_p^{-1}a_{p+1}\cdots a_k \\ 
&=a_pa_{p+1}g_p^{-1}g_{p+1}^{-1}\cdots a_k \\ 
&= a_pa_{p+1}g_{p+1}^{-1}\cdots a_k g_p^{-1} \\ 
&\vdots \\ 
&= a_p\cdots g_{k-1}^{-1}a_k g_p^{-1}\cdots g_{k-2}^{-1} \\
&= a_p\cdots a_k g_p^{-1}\cdots g_{k}^{-1}\\
&= \Delta g_p^{-1}\cdots g_k^{-1}
\end{align*}

\end{proof}

 \begin{theorem}\label{normal}
 $N$ is a normal abelian subgroup of $\tilde{A}_n$ and 
 \begin{equation}\label {quotion1}
 \faktor{\tilde{A}_n}{N} \cong A_n
 \end{equation}
 \end{theorem}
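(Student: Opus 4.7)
The plan is to prove three assertions in turn: $N$ is normal, $N$ is abelian, and $\tilde{A}_n/N \cong A_n$. Normality follows immediately from Proposition \ref{ai gj relations}, which writes each conjugate $a_k g_j a_k^{-1}$ explicitly as a product of $g_i^{\pm 1}$'s; since $N = \langle g_1,\ldots,g_n\rangle$ and the $a_k$ generate $\tilde{A}_n$, this gives $gNg^{-1}\subseteq N$ for every $g\in \tilde{A}_n$.

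For abelianness, I would first extract from $p_ip_j = p_{j+1}p_{i-1}$ the auxiliary identity $p_a p_b^{-1} = p_{b+1}^{-1} p_{a+1}$. Then starting from $g_j g_k = p_j^{-1}p_{j+1}p_k^{-1}p_{k+1}$, I would apply in order: the auxiliary identity to rewrite $p_{j+1}p_k^{-1} = p_{k+1}^{-1}p_{j+2}$; Proposition \ref{pi relations} to swap $p_{j+2}p_{k+1} = p_{k+2}p_{j+1}$; and Corollary \ref{inverse pi relations} to rewrite $p_j^{-1}p_{k+1}^{-1} = p_k^{-1}p_{j+1}^{-1}$ and $p_{j+1}^{-1}p_{k+2} = p_{k+1}p_j^{-1}$. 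The resulting word is $p_k^{-1}p_{k+1}p_j^{-1}p_{j+1} = g_k g_j$, with all indices read mod $(n+1)$.

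For the quotient, the plan is to construct an explicit surjection $\phi: \tilde{A}_n \to A_n$ defined by $\phi(a_i)=a_i$ for $1 \leq i \leq n$ and $\phi(a_{n+1}) = w$, where $w := a_1 a_2 \cdots a_{n-1} a_n a_{n-1} \cdots a_1$. Under the identification $A_n \cong S_{n+1}$ with $a_i \mapsto (i,i+1)$, the element $w$ is the transposition $(1,n+1)$; so $w$ is an involution, commutes with $a_i$ for $2\leq i\leq n-1$, and $(wa_1)^3 = (wa_n)^3 = 1$. These are exactly the relations needed for $\phi$ to respect the defining relations of $\tilde{A}_n$, so $\phi$ is a well-defined surjective homomorphism. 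A short calculation in $S_{n+1}$ shows $\phi(p_i) = (1,2,\ldots,n+1)$ for every $i$, hence $\phi(g_j)=1$ and $N \subseteq \ker\phi$.

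The reverse inclusion $\ker\phi\subseteq N$ comes from writing $\tilde{A}_n$ as a product $NH$, where $H := \langle a_1,\ldots,a_n\rangle$ is the standard parabolic copy of $A_n$: normality of $N$ makes $NH$ a subgroup, and from $p_1^{-1}p_{n+1} = g_1 g_2 \cdots g_n \in N$ one gets $p_1\in p_{n+1}N\subseteq HN$, so $a_{n+1} = a_n a_{n-1}\cdots a_2 \cdot p_1 \in HN = NH$, giving $\tilde{A}_n = NH$. Any $g\in\ker\phi$ then factors as $g=nh$ with $n\in N,\, h\in H$, and since $\phi|_H = \mathrm{id}_H$, the relation $1 = \phi(g) = h$ forces $g\in N$. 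I expect the only real obstacle to be index bookkeeping in the abelianness computation and the careful verification that $\phi$ is well-defined on every affine braid relation; no conceptually delicate step arises.
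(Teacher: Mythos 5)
Your proof is correct, and while the normality and abelianness parts run on the same rails as the paper (the paper checks commutativity for arbitrary products $p_{j_1}^{-1}p_{i_1}$, $p_{j_2}^{-1}p_{i_2}$ directly from Corollary \ref{inverse pi relations}, whereas you reduce to the generators $g_j$; both are routine consequences of Proposition \ref{pi relations} and its corollary), your treatment of the isomorphism $\tilde{A}_n/N\cong A_n$ is genuinely different from the paper's. The paper proves that the quotient map restricted to the parabolic $H=\langle a_1,\dots,a_n\rangle$ is an isomorphism onto the quotient: surjectivity via the same identity $a_{n+1}=a_n\cdots a_2\,p_{n+1}(p_{n+1}^{-1}p_1)$ you use, and injectivity by showing $H\cap N=\{1\}$ through a case analysis on the block echelon form of a hypothetical nontrivial element of the intersection, deriving a contradiction from the fact that such an element would have to commute with a suitable $g_p$ (this is the most delicate part of the paper's argument, and incidentally the spot where the paper writes $H$ for $N$). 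You instead build an explicit retraction $\phi:\tilde{A}_n\to A_n$ sending the affine node $a_{n+1}$ to the reflection $(1,n+1)=a_1\cdots a_{n-1}a_na_{n-1}\cdots a_1$, verify the Coxeter relations under the identification $A_n\cong S_{n+1}$, check $\phi(p_i)=(1,2,\dots,n+1)$ for all $i$ so that $N\subseteq\ker\phi$, and then get $\ker\phi=N$ from the factorization $\tilde{A}_n=NH$ together with $\phi|_H=\mathrm{id}$; the first isomorphism theorem finishes it. This buys you a cleaner argument that entirely avoids the echelon-form combinatorics, at the modest cost of one deferred (but easily checked, and correct) computation of $\phi(p_i)$ in $S_{n+1}$ and an appeal to the standard fact that the parabolic $H$ carries the Coxeter presentation of $A_n$ --- a fact the paper's proof also needs implicitly to identify the quotient with $A_n$. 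Both proofs are sound; yours is the more structural one.
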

 
 \begin{proof}
 First we establish that the group $N$ is abelian.  Let $(j_1,i_1)$ and $(j_2,i_2)$ be any pair of indices with $j_k,i_k \in \{1,2,\ldots,n+1\}$. we will show that $p_{j_1}^{-1}p_{i_1}$ and $p_{j_2}^{-1}p_{i_2}$ commute.     If $j_1 = i_1$ or $j_2=i_2$, then there is nothing to show, since the corresponding product is 1. Otherwise, we have by Corollary \ref{inverse pi relations}
\begin{align*}
p_{j_1}^{-1}p_{i_1}\cdot p_{j_2}^{-1}p_{i_2} & = p_{j_1}^{-1} p_{j_2+1}^{-1}p_{i_1-1}p_{i_2}\\
 & = p_{j_1}^{-1} p_{j_2+1}^{-1}p_{i_2-1}p_{i_1}\\
 & = p_{j_2}^{-1} p_{j_1+1}^{-1}p_{i_1-1}p_{i_2}\\
 & = p_{j_2}^{-1} p_{i_2}\cdot p_{j_1}^{-1}p_{i_1}
 \end{align*}
 as claimed.
 
 Let 
 $$P: \tilde{A}_n \to \faktor{\tilde{A}_n}{N}, \ Pw=wN $$ 
 be the quotion homomorphism. To prove \eqref{quotion1} we show that the restriction of $P$ to $\langle a_1,...,a_n\rangle$,  is an isomorphism.
 
 Consider the collection of cosets
 $$\{a_1N,a_2N,\ldots, a_nN\}$$

 We want to show that this collection is enough to generate the quotient.  Recall that $p_{n+1} = a_1\cdots a_n$, so by construction it does not contain the generating element $a_{n+1}$ in it.  In fact, 
 \begin{align*}
 p_{n+1}^{-1}p_1 & = p_{n+1}^{-1}a_2a_3\cdots a_{n_1}a_na_{n+1}
 \end{align*}
 
 This implies 
 $$a_{n+1} = a_na_{n-1}\cdots a_3a_2p_{n+1}(p_{n+1}^{-1}p_1)$$
 Notice that everything to the left of $p_{n+1}^{-1}p_1$ in the last expression is a product of elements $a_1, \ldots, a_n$ only.  This shows that $a_{n+1}$ can be represented as a product of the first $n$ generators and a group element of $N$.  It follows that every element of the quotient can be represented as a product of the cosets described above, and, therefore, the restriction of $P$ to $\langle a_1,...,a_n\rangle$ is onto the quotient. To show that it is one-to-one we must prove that
 $$\langle a_1,...,a_n\rangle \cap H=\{1\}. $$

Since $H$ is a normal subgroup,  all conjugacy classes of elements of $H$ are in $H$. Hence, if $w\in \langle a_1,...,a_n\rangle$ and $w\in H$, the $A_n$-block echelon form of $w$, \ $w^\prime$, is in $H$. Write
$$w^\prime=\Delta_1\cdots \Delta_k.$$
If the block $\Delta_1$ is a single letter word, $\Delta_1=a_p$, by Proposition \ref{ai gj relations} $g_p$ commutes with all $a_j, \ j\geq p+2$, and, hence, $g_p^{-1}$ commutes with $\Delta_2,...,\Delta_k$, and we obtain
$$g_pw^\prime=g_pa_p\Delta_2\cdots\Delta_k=g_p^{-1}a_p\Delta_2\cdots\Delta_k=a_2\Delta_2\cdots \Delta_kg_p^{-1}=w^\prime g_p^{-1}\neq w^\prime g_p, $$
so that $w^\prime$ does not commute with $g_p$, and, thus, can not belong to $H$.

If $|\Delta_1|\geq 2$ write $\Delta_1=a_p\cdots a_s$. We have
\begin{eqnarray*}
&w^\prime \in H \implies a_{p+1}w^\prime a_{p+1} \in H,\\
&a_{p+1}w^\prime a_{p+1}=a_{p+1}a_pa_{p+1}\cdots a_s \Delta_2 \cdots \Delta_k a_{p+1}=\Delta_1 \cdots \Delta_k a_pa_{p+1}=w^\prime a_pa_{p+1}, \\
&\implies a_pa_{p+1}\in H.
\end{eqnarray*}
Now, we have
$$g_p\big(a_pa_{p+1}\big)=a_pg_p^{-1}a_{p+1}=a_pa_{p+1}g_p^{-1}g_{p+1}^{-1}\neq \big(a_pa_{p+1}\big)g_p, $$
a contradiction. We are done.

\end{proof}


 \section{Main result for $\tilde{A}_n$} \label{main}
 
 First we establish a special way to represent each element of $\tilde{A}_n$.
 
 \begin{lemma} \label{echelon form tildean}
 Given an element, $w\in \tilde{A}_n$, using admissible transformations we can transform it into the following form:
 \begin{equation}\label{tilde echelon}
 \Delta_1\cdots \Delta_k g_1^{\ell_1}\cdots g_{i_1}^{\ell_{i_1}}g_{i_2-1}^{\ell_{i_2-1}}g_{i_2}^{\ell_{i_2}}\cdots g_{i_k}^{\ell_{i_k}}g_r^{\ell_r}\cdots g_{n}^{\ell_{n}}
 \end{equation}
 where $i_j$ enumerates the index of the first letter in the $i$-th  block and $r$ is 1 more than the index of the last letter in $\Delta_k$. We call \eqref{tilde echelon}  \textbf{$\tilde{A}$-echelon form} (or simply ``echelon form" when there is no ambiguity) of $w$. 
 \end{lemma}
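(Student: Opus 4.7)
The plan is to reduce $w$ in three stages: first eliminate every occurrence of the generator $a_{n+1}$; second, reduce the resulting $A_n$-part to block echelon form; third, use the commutativity of $N$ together with Corollary \ref{commutations delta and g} to arrange the $g_j$'s in the prescribed pattern.

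For the first stage, I start from any word representation of $w$ in the Coxeter generators $a_1,\ldots,a_{n+1}$ and use the identity
$$a_{n+1} \;=\; (a_n a_{n-1}\cdots a_2)(a_1 a_2\cdots a_n)\,g_{n+1},$$
which is extracted from the proof of Theorem \ref{normal} (together with the observation $g_{n+1}=(g_1 g_2\cdots g_n)^{-1}\in N$, obtained by telescoping the definition $g_j = p_j^{-1}p_{j+1}$ and using that $N$ is abelian). Every occurrence of $a_{n+1}$ is replaced by this expression, so each $a_{n+1}$ is traded for a string in $\{a_1,\ldots,a_n\}$ followed by an element of $N$, and each $g_j$ is itself a fixed word in the $a_i$'s so the rewriting stays within the admissible framework. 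I then apply Proposition \ref{ai gj relations} to push each newly introduced $g_j^{\pm 1}$ past any subsequent $a_i$'s, noting that each such push only modifies the index and possibly the exponent. An induction on the number of $a_{n+1}$ occurrences shows that after finitely many moves $w$ is of the form $v\cdot h$ with $v\in\langle a_1,\ldots,a_n\rangle$ and $h$ a word in $\{g_1^{\pm 1},\ldots,g_n^{\pm 1}\}$.

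For the second stage, I apply the algorithm of section \ref{section 2} to reduce $v$ to its $A_n$-block echelon form $\Delta_1\cdots\Delta_k$. All moves take place inside $\langle a_1,\ldots,a_n\rangle$ and leave $h$ untouched. For the third stage, since $N$ is abelian (Theorem \ref{normal}), I may freely permute the factors of $h$. To attain the exact placement in \eqref{tilde echelon}, I use Corollary \ref{commutations delta and g} to transport each $g_j$ whose index lies inside a block $\Delta_\ell$ across that block; each such move changes the index and exponent according to the rules of the corollary, and in particular leaves a $g$ whose index lies in one of the ``free'' slots: the indices $1,\ldots,i_1$ before the first block, the single-index gap $i_{\ell+1}-1$ between consecutive blocks, and the indices $r,\ldots,n$ after the last block. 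After finitely many such transports the word takes the prescribed form.

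The main obstacle is the careful bookkeeping in the first stage. Each replacement of an $a_{n+1}$ introduces a factor $g_{n+1}$ which then has to travel past every generator to its right, and each commutation from Proposition \ref{ai gj relations} can split one $g$-factor into two. I would control this by an induction on the number of $a_{n+1}$'s in the starting word, showing that at each step we have strictly fewer $a_{n+1}$'s while the total collection of $g$-factors grows in a controlled way. A secondary subtlety is that in the third stage, when a $g_j$ whose index is the last letter of a block $\Delta_\ell$ is pushed across $\Delta_\ell$, Corollary \ref{commutations delta and g} produces an entire product $g_{i_\ell}^{-1}\cdots g_{s_\ell}^{-1}$ on the other side; verifying that these expansions still fit within the index pattern of \eqref{tilde echelon} is the combinatorial core of the argument and relies on the fact that block echelon forms have exactly one free index between adjacent blocks.
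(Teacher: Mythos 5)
Your proposal takes essentially the same route as the paper's proof: split $w$ into an $A_n$-part times an element of $N$, reduce the $A_n$-part to block echelon form, and then use the commutation relations of Corollary \ref{commutations delta and g} to funnel the $g$-exponents into the permitted index slots. The only substantive difference is in the first stage, where you eliminate $a_{n+1}$ by hand; the paper simply invokes $\tilde{A}_n = A_n\rtimes N$ and writes $w=xm$. Two remarks. First, the step you flag as the unresolved ``combinatorial core'' --- the expansion $\Delta g_k = g_p^{-1}\cdots g_k^{-1}\Delta$ when a $g$ indexed at a block endpoint is pushed across the block --- is avoided entirely in the paper by choosing the direction of transport: for a block $\Delta=a_p\cdots a_h$ one always takes the largest index $j\in\{p+1,\dots,h\}$ with $\ell_j\neq 0$, moves $g_j^{\ell_j}$ circularly to the front of the word, and pushes it rightward through $\Delta$ via the rule $g_j\Delta=\Delta g_{j-1}$ (valid for $p<j\le h$), which only decrements the index; iterating, all exponents inside the block accumulate at the first index $p$, and the expansion rules for $j=p$ and $j=k$ are never needed. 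Second, your claim that the block echelon reduction ``leaves $h$ untouched'' is not literally true, since that reduction uses circular transformations which carry letters past $h$; what is true (and all that is needed) is that $h$ is replaced by a conjugate $u^{-1}hu$, which is again an element of $N$ because $N$ is normal --- this is exactly the paper's remark that $m$ may change to some $m'\in N$.
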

 
 \begin{proof}
 Let $w\in \tilde{A}_n$. Since $\tilde{A}_n = A_n \rtimes N$, $w$ can be written as 
 $$w= xm \quad x\in A_n, m\in N.$$

Using admissible transformations we can bring $x$ to $A_n$-block echelon form.  This process might change the element $m$, to a different  element  $m^\prime \in N$, so we obtain
$$w = xm \sim \Delta_1\cdots\Delta_k \prod_{j=1}^{n} g_j^{\ell_j}$$
(here
$m' = \prod_{j=1}^{n} g_j^{\ell_j}$).

We perform further admissible transformations to construct our $\tilde{A}_n$-echelon form.  We start with the first block $\Delta_1$.  

(Case 1) $|\Delta_1| = 1$, then $\Delta_1 = a_p$ for some index $1\leq p \leq n$.  We consider $g_p^{\ell_p}$ and if $|\ell_p|<2$ then we do nothing, otherwise we perform the following transformations:
\begin{align*}
 &a_p\Delta_2\cdots\Delta_k g_1^{\ell_1}\cdots g_p^{\ell_p}\cdots g_{n}^{\ell_{n}} \\ 
=& a_p\Delta_2\cdots\Delta_k g_1^{\ell_1}\cdots g_p^{\ell_p-1}\cdots g_{n}^{\ell_{n}}g_p \\ 
\sim & g_pa_p\Delta_2\cdots\Delta_k g_1^{\ell_1}\cdots g_p^{\ell_p-1}\cdots g_{n}^{\ell_{n}} \\
= &a_pg_p^{-1}\Delta_2\cdots\Delta_k g_1^{\ell_1}\cdots g_p^{\ell_p-1}\cdots g_{n}^{\ell_{n}}\\
= &a_p\Delta_2\cdots\Delta_k g_1^{\ell_1}\cdots g_p^{\ell_p-2}\cdots g_{n}^{\ell_{n}}.
\end{align*}

Here we use circular transformation to move one of the $g_p$ elements to the front then commuting relations from Proposition \ref{ai gj relations} and Corollary \ref{commutations delta and g} in the last two lines.  This argument works the same if $\ell_p <0$ as well, so in both cases we have reduced the number of $g_p$ (or $g_p^{-1}$) in this form.  We continue this process until either $\ell_p = 0$ or $\ell_p = \pm 1$.  
 
A similar argument will work for every block of length 1.

(Case 2) $|\Delta_1| > 1$. Write $\Delta_1 = a_p\cdots a_h$.  If $\sum_{i=p+1}^h |\ell_i| =0$ then there is nothing to do, otherwise assume there is at least one non-zero ${\ell_j}$. Let $j=max\Big\{ p\leq i\leq h: \ |\ell_i|\neq 0 \Big \}$.  We perform the following transformations:
\begin{align*}
&\Delta_1 \Delta_2\cdots \Delta_k g_1^{\ell_1}\cdots g_j^{\ell_j}\cdots g_{n}^{\ell_{n}}\\ 
= &\Delta_1 \Delta_2\cdots \Delta_k g_1^{\ell_1}\cdots g_{n}^{\ell_{n}}g_j^{\ell_j}\\
\sim &g_j^{\ell_j}\Delta_1 \Delta_2\cdots \Delta_k g_1^{\ell_1}\cdots g_{n}^{\ell_{n}}\\ 
= &\Delta_1g_{j-1}^{\ell_j} \Delta_2\cdots \Delta_k g_1^{\ell_1}\cdots g_{n}^{\ell_{n}}\\
 = & \Delta_1\Delta_2\cdots \Delta_kg_{j-1}^{\ell_j} g_1^{\ell_1}\cdots g_{n}^{\ell_{n}}\\
 = &\Delta_1 \Delta_2\cdots \Delta_k g_1^{\ell_1}\cdots g_{j-1}^{\ell_{j-1}+\ell_j} \cdots g_{n}^{\ell_{n}}.
\end{align*}
 Here we moved $g_j^{\ell_j}$ to the right, then circularly to the front of the word.  Then we moved $g_j^{\ell_j}$ to the right using the commuting relations in Corollary \ref{commutations delta and g}. We call such transformation \textit{clockwise transformation}.
 
  This process results in decreasing the maximal index with non-trivial power of $g$ elements by at least 1. 
  We  continue to perform clockwise transformations until either there are no $g_i$-s left, or $g_p$ is the only one with a non-trivial power and we reach the word 
  
 $$a_p\cdots a_h \Delta_2\cdots \Delta_k g_1^{\ell_1}\cdots g_p^{\ell_p'}g_{h+1}^{\ell_{h+1}} \cdots g_{n}^{\ell_{n}}$$
 where $\ell_p' = \sum_{i=p}^{h} \ell_i$.  We do this for each block, and leave the remaining $g_j$ elements untouched. The proof is comlete.
 \end{proof}

Let $K$ be a subset of $\tilde{A}_n$ that consists of all $A_n$-block echelon forms   with blocks of at most size $2$ (we consider $A_n=\langle a_1,...,a_n\rangle$), $g_i$ elements, and their inverses.  Note that $K$ is a finite set in $\tilde{A}_n$.
 
  \begin{theorem}\label{main new}
 Suppose that $\rho_1$ and $\rho_2$ are complex finite-dimensional representations of $\tilde{A}_n$.  If we have 
 $$\sigma_p^d(K,\rho_1) = \sigma_p^d(K, \rho_2)$$
 Then $\chi_{\rho_1} = \chi_{\rho_2}.$
 \end{theorem}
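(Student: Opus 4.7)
My plan is to combine Lemma~\ref{echelon form tildean} with Lemma~\ref{Powers lemma}, following the scheme of Theorem~\ref{A-groups} but exploiting the extra elements of $K$ that correspond to blocks of size~$2$. By Lemma~\ref{echelon form tildean}, every $w\in\tilde{A}_n$ is conjugate via admissible transformations to its $\tilde{A}$-echelon form
$$w\;=\;\Delta_1\Delta_2\cdots\Delta_k\cdot g_1^{\ell_1}\cdots g_n^{\ell_n}.$$
Since characters are class functions, it suffices to establish $\chi_{\rho_1}(w)=\chi_{\rho_2}(w)$ for every canonical $w$ of this form. I would induct on the pair $(M,L)$, where $M$ is the maximum size among $|\Delta_1|,\ldots,|\Delta_k|$ and $L$ is the total word length, ordered lexicographically.

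In the base case $M\leq 2$, the finite part $\Delta:=\Delta_1\cdots\Delta_k$ is itself an $A_n$-block echelon form with all blocks of size at most~$2$ and therefore lies in $K$. The argument of Theorem~\ref{A-groups} then applies verbatim with $\Delta$ in the role of the class representative: the signature placing $\Delta$ once and each $g_j^{\mathrm{sign}(\ell_j)}$ with multiplicity $|\ell_j|$ yields, via Lemma~\ref{Powers lemma}, a sum of traces over all words in $K$ of this signature, and each such word cyclically rearranges (using trace cyclicity and the abelianness of $N$) to $\Delta\cdot g_1^{\ell_1}\cdots g_n^{\ell_n}=w$. The signature sum is thus a positive integer multiple of $\chi_\rho(w)$, and the desired equality follows. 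In the inductive step $M\geq 3$, I would split each block $\Delta_i=a_{p_i}\cdots a_{q_i}$ of size $\geq 3$ into consecutive sub-blocks of size at most~$2$, each a single element of $K$, and apply Lemma~\ref{Powers lemma} to the signature that assigns multiplicity~$1$ to each sub-block (and each original small block) and multiplicity $|\ell_j|$ to each $g_j^{\mathrm{sign}(\ell_j)}$. Two observations collapse the resulting sum: (a)~all Coxeter elements of the parabolic $\langle a_{p_i},\ldots,a_{q_i}\rangle\cong S_{q_i-p_i+2}$ are mutually conjugate, so any reordering of the sub-blocks of a single $\Delta_i$ produces an element conjugate to $w$, with the conjugator lying inside the parabolic and, by normality of $N$, acting on the $g$-part within~$N$; (b)~letters from different parabolic subgroups commute (the blocks are separated by gaps), so sub-blocks of distinct $\Delta_i$'s can be segregated without changing the group element. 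Using Proposition~\ref{ai gj relations} and Corollary~\ref{commutations delta and g} to track $g$-index shifts when $a$-letters and $g$-letters interleave, every word in the signature reduces to either a conjugate of $w$ itself (contributing to the coefficient of $\chi_\rho(w)$) or to an element whose $\tilde{A}$-echelon form has strictly smaller $(M,L)$-complexity, handled by the inductive hypothesis.

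The principal obstacle is the bookkeeping in the last step: when an $a$-letter passes a $g$-letter, the shift rules of Proposition~\ref{ai gj relations} can convert one $g$-letter into two (as in $a_{i-1}g_i=g_{i-1}g_ia_{i-1}$), so the $\tilde{A}$-echelon form of the rearranged word may acquire additional $g$-factors. One must verify that any such enlargement of the $G$-part is compensated by a strictly simpler $A_n$-part (smaller $M$, or the same $M$ with strictly smaller $L$), so that the resulting element genuinely falls under the inductive hypothesis and does not re-introduce $\chi_\rho(w)$ as an unknown in the linear relation produced by Lemma~\ref{Powers lemma}. This verification is a finite but delicate combinatorial check relying on the explicit commutation relations of Section~\ref{subgroupconstruction} together with the Coxeter-element conjugacy in finite $A$-type parabolics.
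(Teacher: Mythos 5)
Your base case ($M\le 2$) is exactly the paper's first case and is sound: taking $\Delta=\Delta_1\cdots\Delta_k\in K$ as a single letter of the alphabet, every word of the prescribed signature is carried by a circular transformation and the commutativity of $N$ back to $\Delta g_1^{\ell_1}\cdots g_n^{\ell_n}=w$, so Lemma~\ref{Powers lemma} yields a positive multiple of $\chi_\rho(w)$ on each side. The inductive step, however, has a genuine gap, and it sits precisely where you flag the ``principal obstacle.'' Your observation (a) shows that reordering the size-$\le 2$ sub-blocks of a long block $\Delta_i$ produces an element of the parabolic conjugate to $\Delta_i$ by some $c$ inside that parabolic; but conjugating the whole word by $c$ sends the $N$-part $m=g_1^{\ell_1}\cdots g_n^{\ell_n}$ to $cmc^{-1}$, which by Proposition~\ref{ai gj relations} is in general a different element of $N$. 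The permuted word is therefore conjugate to $\Delta_1\cdots\Delta_k\,(cmc^{-1})$, not to $w$. That element has exactly the same $A_n$-part as $w$ --- same blocks, same maximal block size $M$ --- while its $N$-part is different and need not be shorter (relations such as $a_{i-1}g_i=g_{i-1}g_ia_{i-1}$ can lengthen it). So it is neither a conjugate of $w$ nor an element of strictly smaller $(M,L)$-complexity, and the identity produced by Lemma~\ref{Powers lemma} then carries several unknown character values on each side. The ``finite but delicate combinatorial check'' you defer is not one that succeeds: for consecutive sub-blocks and the lexicographic order on $(M,L)$, the dichotomy ``conjugate to $w$ or strictly simpler'' fails already for a single block of length $5$ with a nontrivial $g$-part.

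The paper's proof is organized so that this situation never arises: no induction is used, because the alphabet is chosen so that \emph{every} word of the prescribed signature is conjugate to $w$. Instead of cutting a long block $a_p\cdots a_k$ into consecutive pairs, the paper glues $a_pa_{p+1}$ to the end of the preceding segment to form a letter $s_j$, lets the middle letters $a_{p+2},\dots,a_{k-1}$ enter the alphabet individually, and glues $a_k$ to the start of the next segment; with this choice each $\hat g$-letter fails to commute with only one letter of the alphabet, and every permutation can be returned to the original $\tilde A$-echelon word by commuting and circular transformations alone, so the entire signature class is a single conjugacy class. To salvage your scheme you would have to either prove that $\Delta m$ and $\Delta(cmc^{-1})$ are always conjugate in $\tilde A_n$ for the conjugators $c$ that arise (you have not done this, and it is not true in general, since conjugacy classes here are sensitive to the $N$-part), or redesign the sub-blocks so that only cyclic rearrangements can occur --- which is, in effect, what the paper's construction of $K_1$ accomplishes.
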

 
 \begin{proof}
 Given any element $w\in \tilde{A}_n$ we use admissible transformations to transform it into its $\tilde{A}$-echelon form \eqref{tilde echelon}.  If for every block in \eqref{tilde echelon} $|\Delta_j| \leq 2$, then $\Delta_1\cdots\Delta_k \in K$ and we apply Lemma \ref{Powers lemma} to the elements 
 $$\tau=\Delta_1\cdots\Delta_k \ \mbox{and} \  g_i^{sign(\ell_i)} \ \mbox{for those} \ i\mbox{ where } \ \ell_i \neq 0$$
  and consider the signatures where the multiplicities of $g_i^{sign(\ell_i)}$ equal to $|\ell_i|$ and of $\tau$ - to 1.  It is easy to show by using the commutation relations of Corollary \ref{commutations delta and g} and circular transformations  that every permutation of these elements is in the same conjugacy class as $w$, 
and, therefore, Lemma \ref{Powers lemma} implies that $\chi_{\rho_1}(w)=\chi_{\rho_2}(w)$. 
 
 \vspace{.2cm}
 
Now, let us assume that in the $\tilde{A}$-echelon form of $w$ there are $\Delta_j$ such that $|\Delta_j|\geq 3$. Let $q$ be the number of blocks with $|\Delta_j|\geq 3$ and $i_1,...i_q$ be the indexes of these blocks. We further write $\Delta_{i_j} = a_{p_{i_j}}\cdots a_{k_{i_j}}, \ 1 \leq j \leq q$. Since $\sigma_p^d(K,\rho_1) = \sigma_p^d(K, \rho_2)$, for every $K_1\subset K$ we have 
$$\sigma_p^d(K_1,\rho_1) = \sigma_p^d(K_1, \rho_2)$$ 
as well. We will define a subset $K_1\subset K$ in such way that an application of Lemma \ref{Powers lemma} would imply the quality $\chi_{\rho_1}(w)=\chi_{\rho_2}(w)$.

\vspace{.3cm}

\noindent Step 1. \textbf{Choosing  $K_1$.}

\noindent a). If $i_1>1$ then the word $\Delta_1...\Delta_{i_1-1}$ is in $K$, so we choose $s_1=\Delta_1...\Delta_{r-1}a_{p_{i_1}}a_{p_{i_1+1}}$ as our first element of $K_1$.  If $i_1=1$, then $s_1=a_{p_{i_1}}a_{p_{i_1+1}}$. 

\vspace{.2cm}

\noindent b). We distinguish between the following 2 cases: \\
\noindent $b_1)$.\  $ |\Delta_{i_1}|=3$.  

\noindent If $i_2>i_1+1$, then $s_2=a_{p_{i_1+2}}\Delta_{i_1+1}...\Delta_{i_2-1}a_{p_{i_2}}a_{p_{i_2+1}}$. \\
If the block $\Delta_{i_1+1}$ has length greater than or equal to 3, that is $i_2=i_1+1$, then $s_2=a_{p_{i_1+2}}a_{p_{i_2}}a_{p_{i_2+1}}$. \\
\noindent $b_2$). \ $|\Delta_{i_1}|>3$. In this case we add to $K_1$ single letter words $a_{p_{i_1+2}}, a_{p_{i_1+3}},...,a_{p_{k_{i_1}-1}}$  and the word $s_2$ which is formed by the same rule as in $b_1$). 

\vspace{.2cm}

\noindent c). We proceed adding elements to $K_1$ applying the same rules as in $b_1)$ and $b_2)$ to the subsequent blocks.

\vspace{.2cm}

\noindent d). We finish  the formation of the set $K_1$ by adding to $K_1$ elements $\hat{g}_j=g_j^{sign(\ell_j)}$ for those $j$ where  $\ell_j\neq 0$ in $\tilde{A}$-echelon form  \eqref{tilde echelon} of $w$.
\vspace{.2cm}

\noindent Write  $m_j=|\ell_j|$. The $\tilde{A}$-echelon form \eqref{tilde echelon} of $w$ is written in the alphabet $K_1$ as
\begin{equation}\label{echelon in K1}
s_1\tau_1 s_2\tau_2\cdots 
	s_q \tau_qs_{q+1}\hat{g}_1^{m_1}\cdots \hat{g}_{i_1}^{m_{i_1}}\hat{g}_{i_2-1}^{m_{i_2-1}}\hat{g}_{i_2}^{m_{i_2}}\cdots \hat{g}_{i_k}^{m_{i_k}}g_r^{\ell_r}\cdots \hat{g}_{n}^{m_{n}},
\end{equation}
  
where $\tau_j=a_{p_{i_j+2}}\cdots a_{p_{k_j-1}}$ are words comprised of the letters added to the alphabet $K_1$ following  $b_2)$ rule above.
\vspace{.2cm}

\noindent Step 2. \textbf{Application of Lemma \ref{Powers lemma}}

\vspace{.3cm}

\noindent We will be applying Lemma \ref{Powers lemma} to the set of matrices $\rho_j(K_1), \ j=1,2$ and  the words with signature where the multiplicity  of every element which is not $\hat{g}_j$ is 1 and the multiplicity of $\hat{g}_j$ is $m_j$.  To complete the proof it suffices to show that all words obtained as permutations of letters in \eqref{echelon in K1} with the the same signature lie in the same conjugacy class (note that we admit permutations of the letters the words, $\tau_j$ are comprised of). 

First, we remark that every $\hat{g}_i$ commutes with all single letter elements of $K_1$ and with all but 1 elements of $s$ type elements. This implies that given any permutation we can use admissible transformations bringing it to the form where all $\hat{g}_i$ elements are at the end of the word following single letter and $s$ type elements. If the element which does not commute with $\hat{g}_i$ precedes it, then we move $\hat{g}_i$ to the right; if it follows  $\hat{g}_i$, we move $\hat{g}_i$ to the left to the beginning of the word, and then use a circular transformation to move it to the back of the word. Now, our word looks like 
\begin{equation}\label{1} 
\delta_1s_1\delta_2\hat{g}_1^{m_1}\cdots \hat{g}_{i_1}^{m_{i_1}}\hat{g}_{i_2-1}^{m_{i_2-1}}\hat{g}_{i_2}^{m_{i_2}}\cdots \hat{g}_{i_k}^{m_{i_k}}\hat{g}_r^{m_r}\cdots \hat{g}_{n}^{m_{n}},
\end{equation}
where $\delta_1$ and $\delta_2$ do not contain $g$ letters.
We use circular transformation to get
$$
s_1\delta_2\gamma_1 \gamma_2\delta_1,
$$
where $\gamma_1$ is the product of all $\hat{g}$ elements in \eqref{1} that commute with $\delta_1$, and $\gamma_2$ - of those which do not (of course $\gamma_1$ and $\gamma_2$ commute, since both are in $N$). Since every $g$ element in $\gamma_2$ does not commute with $\delta_1$ and for every $g$ element there is only 1 element in $K_1$ which it does not commute with, $\gamma_2$ commutes with $\delta_2$ and $s_1$, so that
\begin{equation}\label{transformations1}
s_1\delta_2\gamma_1 \gamma_2\delta_1 =  \gamma_2 s_1\delta_2 \delta_1 \gamma_1\sim s_1\delta_2 \delta_1\gamma_1\gamma_2. 
\end{equation}
If $\tau_1=1$ (which happens in the case described in $b_1)$), we have transformed our permuted word into
\begin{equation}\label{transformations2}
s_1\tau_1\cdots.	
\end{equation}

Now, suppose that $\tau_1=a_{p_{i_1+2}}\cdots a_{p_{k_1-1}}\neq 1$. All the letters of which $\tau_1$ is comprised of, excepts for $a_{p_{i_1+2}}$, commute with $s_1$. Let us call $a_{p_{i_1+s_1}},\cdots ,a_{p_{i_1+s_t}}$ those of them which lie between $s_1$ and $a_{p_{i_1+2}}$ in \eqref{transformations1}. We also observe that $a_{p_{i_1+2}},\cdots,,a_{p_{k_1-2}} $ commute with $s_j$ and all letters in $\tau_j$ for $j\geq 2$ and that $a_{p_{k_1-1}}$ commutes with all $s_j, \ j\geq 3$ and all letters in $\tau_j, \ j\geq 2$. 

\vspace{.2cm}

\noindent I). If $a_{p_{k_1-1}}$ is not among $a_{p_{i_1+s_1}},\cdots ,a_{p_{i_1+s_t}}$, or, if $a_{p_{k_1-1}}$ is among them, but $s_2$ is not between $s_1$ and $a_{p_{k_1-1}}$, we can move these letters one after another to the very beginning of the word in front of $s_1$, and then using a circular transformation bring them to the back, thus getting a word
$$s_1\cdots a_{p_{i_1+2}}\cdots, $$
and here all letters of the alphabet $K_1$ that are between $s_1$ and $a_{p_{i_1+2}}$ commute with $a_{p_{i_1+2}}$, so that we can move $a_{p_{i_1+2}}$ to the left resulting in
\begin{equation}\label{transformations3} 
s_1a_{p_{i_1+2}}\cdots . 
\end{equation}

\vspace{.2cm}

\noindent II). If $a_{p_{k_1-1}}$ is among $a_{p_{i_1+s_1}},\cdots ,a_{p_{i_1+s_t}}$ and $s_2$ is between $s_1$ and $a_{p_{k_1-1}}$, we first move $s_2$ to the left, in the very front of the word (as it commutes with every letter in front of it), then circularly move it to the back of the word, and, finally, proceed as in I) again resulting in \eqref{transformations3}.

\vspace{.2cm}

We proceed this way with letters $a_{p_{i_1+3}},...a_{p_{k_1-1}}$ until we obtain a word

$$
s_1a_{p_{i_1+2}}\cdots a_{p_{k_1-1}}\cdots=s_1\tau_1\cdots . 
$$

We continue inductively. Suppose that we have already transformed our word to the form
\begin{equation}\label{transformations4} 
s_1\tau_1\cdots s_t\tau_t \cdots 
\end{equation} 

Consider the word comprised of all the elements of the alphabet $K_1$ that are between $\tau_t$ and $s_{t+1}$ in \eqref{transformations4}. It might be written as
$$\delta_1 s_{u_1} \delta_2 s_{u_2}\cdots s_{u_l}\delta_{l+1}, $$
where $\delta_1,\cdots,\delta_{l+1}$ do not contain $s$-letters. Since all $\hat{g}$ elements commute with all $a$ letters in $K_1$, using only commutation transformations the word $\delta_1$ can be transformed into
$$\gamma_1 \gamma_2 \alpha, $$ 
where $\gamma_1$ and $\gamma_2$ are comprised only of $\hat{g}$ letters, $\alpha$ contains no $\hat{g}$ letters ( and no $s$ letters), $\gamma_1$ commutes with $s_1\tau_1\cdots s_t\tau_t$, and $\gamma_2$ does not. 
We move $\gamma_1$ to the left, and then circularly to the end of the word to obtain
\begin{equation}\label{transformations5}
s_1\tau_1\cdots s_t\tau_t \gamma_2 \alpha s_{u_1}\cdots s_{u_l}\delta_{l+1} s_{t+1}\cdots \gamma_1.
\end{equation}
Since $\gamma_2$ does not commute with $s_1\tau_1\cdots s_t\tau_t$ and for each $\hat{g}$ there is only one $s$ element it does not commute with, $\gamma_2$ commutes with all elements in \eqref{transformations5} which follow it, and moving it to the right we obtain
$$s_1\tau_1\cdots s_t\tau_t \alpha s_{u_1}\cdots s_{u_l}\delta_{l+1} s_{t+1}\cdots \gamma_2 \gamma_1. $$
Further, all letters in $\alpha$ commute with $s_1\tau_1\cdots s_t\tau_t$, so moving it to the left, and circularly we obtain
$$s_1\tau_1\cdots s_t\tau_t  s_{u_1}\cdots s_{u_l}\delta_{l+1} s_{t+1}\cdots \alpha \gamma_2 \gamma_1. $$
Similarly, if $u_1\neq t+1$, \ $s_{u_1}$ commutes with $s_1\tau_1 \cdots s_t\tau_t$ and in a similar way we can transform our word to
$$s_1\tau_1\cdots s_t\tau_t \delta_2s_{u_2}  \delta_{l+1} s_{t+1}\cdots \alpha \gamma_2 \gamma_1 s_{u_1}. $$
We apply the similar procedure to move $\delta_2 s_{u_2},\cdots , \delta_ls_{u_l}$ and $\delta_{l+1}$ to the end of the word and yield
$$s_1\tau_1 \cdots s_t\tau_t s_{t+1} \cdots . $$
The procedure of moving letters of in $\tau_{t+1} $ to directly follow $s_{t+1}$ is similar to the one above described in I) and II). As a result we have transformed our word into
$$s_1\tau_1\cdots s_t\tau_t s_{t+1}\tau_{t+1}\cdots , $$
finishing the induction step.

After $q$ steps we arrive to the word \eqref{echelon in K1} as required. We are done.

\end{proof}

\end{document}